\newtheorem{theorem}{Theorem}[section]
\newtheorem{lemma}[theorem]{Lemma}
\newtheorem{prop}[theorem]{Proposition}
\newtheorem{cor}[theorem]{Corollary}
\theoremstyle{remark}
\newtheorem{example}[theorem]{Example}
\newtheorem{remark}[theorem]{Remark}
\newtheorem{examples}[theorem]{Examples}
\newtheorem*{problem*}{Problem}
\newtheorem*{remark*}{Remark}
\newtheorem*{convention*}{Convention}
\newtheorem*{notation*}{Notation}
\newtheorem*{examples*}{Examples}
\newtheorem*{example*}{Example}
\newtheorem*{warning*}{Warning}
\numberwithin{equation}{section}
\def\N{{\mathbb N}}
\def\R{{\mathbb R}}
\def\T{{\mathbb T}}
\def\C{{\mathbb C}}
\def\Z{{\mathbb Z}}
\def\CR{{\mathcal R}}
\def\CS{{\mathcal S}}
\def\CF{{\mathcal F}}
\def\H{{\mathcal H}}
\newcommand{\newspan}{\operatorname{span}}
\newcommand{\clsp}{\overline{\operatorname{span}}}
\newcommand{\End}{\operatorname{End}}
\newcommand{\BS}{\operatorname{BS}}
\newcommand{\Real}{\operatorname{Re}}\newcommand{\Imag}{\operatorname{Im}}
\begin{document}

\title[Direct limits and wavelets]{Direct limits, multiresolution analyses,\\ and wavelets}

\author[Baggett]{Lawrence~W.~Baggett}
\address{Lawrence Baggett, Judith Packer and Arlan Ramsay, Department of Mathematics, University of Colorado, Boulder, Colorado 80309, USA}
\email{baggett, packer and ramsay@euclid.colorado.edu}
\author[Larsen]{Nadia~S.~Larsen}
\address{Nadia Larsen, Mathematics Institute, University of Oslo, Blindern, NO-0316 Oslo, Norway}
\email{nadiasl@math.uio.no}
\author[Packer]{Judith~A.~Packer}
\author[Raeburn]{Iain~Raeburn}
\address{Iain Raeburn, School  of Mathematics and Applied Statistics, University of Wollongong, NSW 2522, Australia}
\email{raeburn@uow.edu.au}
\author[Ramsay]{Arlan Ramsay}

\begin{abstract}
A multiresolution analysis for a Hilbert space realizes the Hilbert space as the direct limit of an increasing sequence of closed subspaces. In a previous paper, we showed how, conversely, direct limits could be used to construct Hilbert spaces which have multiresolution analyses with desired properties. In this paper, we use direct limits, and in particular the universal property which characterizes them, to construct wavelet bases in a variety of concrete Hilbert spaces of functions. Our results apply to the classical situation involving dilation matrices on $L^2(\R^n)$, the wavelets on fractals studied by Dutkay and Jorgensen, and Hilbert spaces of functions on solenoids.
\end{abstract}

\thanks{This research was supported by the Australian Research Council, the National Science Foundation (through grant DMS-0701913), the Research Council of Norway, and the University of 
Oslo.}

\maketitle

\section*{Introduction}

Suppose that $H$ is a Hilbert space equipped with a unitary operator $D$, which we think of as a dilation, and a unitary representation $T:\Gamma\to U(H)$ of an abelian group, which we think of as a group of translations. A multiresolution analysis (MRA) for $(H,D,T)$ consists of an increasing sequence of closed subspaces $V_n$, whose union is dense, whose intersection is $\{0\}$, and which satisfy $D(V_n)=V_{n+1}$, together with a \emph{scaling vector} $\phi\in V_0$ whose translates $T_\gamma\phi$ form an orthonormal basis for $V_0$; in a generalized multiresolution analysis (GMRA), the existence of the scaling vector is relaxed to the requirement that $V_0$ is $T$-invariant. MRAs and GMRAs play an important role in the construction of wavelets: a wavelet is a vector $\psi$ whose translates form an orthonormal basis for $W_0:=V_1\ominus V_0$, and then $\{D^jT_\gamma\psi:j\in \Z,\;\gamma\in\Gamma\}$ is an orthonormal basis for $H$. A famous theorem of Mallat \cite{mal} gives a procedure for constructing wavelets in the Hilbert space $L^2(\R)$, starting from a quadrature mirror filter, which is a function $m:\T\to \C$ satisfying $|m(z)|^2+|m(-z)|^2=2$, and proceeding through an MRA for the usual dilation operator and integer translations. Baggett, Courter, Merrill, Packer and Jorgensen have generalized Mallat's construction to GMRAs \cite{bcm, bjmp}.

Writing a Hilbert space $H$ as an increasing union of closed subspaces $V_n$ amounts to realizing $H$ as a Hilbert-space direct limit $\varinjlim V_n$. In \cite{gpots}, Larsen and Raeburn constructed MRAs for $L^2(\R)$ by constructing a direct system based on a single isometry $S_m$ on $L^2(\T)$ associated to a quadrature mirror filter $m$, and using the universal property of the direct-limit construction to identify the direct limit $\varinjlim (L^2(\T),S_m)$ with $L^2(\R)$. This yielded a new proof of Mallat's theorem. Subsequently the present authors used a similar construction to settle a question about multiplicity functions of generalized multiresolution analyses \cite{ijkln}. 

Here we will show that the universal properties of direct limits provide useful insight in a variety of situations involving wavelets and their generalizations. Our techniques provide efficient proofs of known results concerning classical wavelets and the wavelets on fractals studied by Dutkay and Jorgensen \cite{dutjor42}. We also obtain some interesting new results. We provide, building on our previous work in \cite{ijkln}, easily verified and very general criteria which imply that the isometries $S_m$ associated to filters are pure isometries (see Theorem~\ref{larryspurecond}). We use our direct-limit approach, and in particular the uniqueness of such limits, to settle a question of Ionescu and Muhly \cite{im} about the support of measures in realizations of MRAs in $L^2$-spaces on solenoids. 

We begin with a short section in which we recall general results on direct limits and MRAs from \cite{ijkln}, and indicate what extra information is needed to yield wavelet bases associated with these MRAs. In an attempt to emphasize how general our approach is, we will work whenever possible with an abstract translation group $\Gamma$, and for most purposes this poses no extra difficulty. In \S\ref{filisom}, we discuss the filters from which we build MRAs and the filter banks from which we build wavelet bases.  One key hypothesis in our general theory says that the isometry $S_m$ associated to a filter is a pure isometry, in the sense that its Wold decomposition has no unitary summand, and we prove our new criterion for pureness in \S\ref{sec:larry}.

In \S\ref{sec:id} we prove our main theorem on identifying direct limits, and illustrate its usefulness by applying it in the classical situation of a low-pass filter associated to dilation by an expansive integer matrix on $\R^n$. In the next two sections, we give several other applications of this theorem. The first involves the wavelets on fractals studied by Dutkay and Jorgensen. Starting with a filter which is definitely not low-pass, we run our direct-limit construction, and identify the direct limit as a Hilbert space of functions on a ``filled-in Cantor set'' constructed in \cite{dutjor42}. Second, under a nonsingularity hypothesis on the filter $m$, we realize our direct limits as spaces of functions on solenoids. This realization applies to both the classical case and the fractal case, and in both cases comparing the solenoidal realization with the original gives interesting information: in the fractal  case, we recover Dutkay's Fourier transform from \cite{dut}, and in the classical case, we deduce that the measure defining the $L^2$-space on the solenoid is supported on a ``winding line,'' thereby confirming a conjecture of Ionescu and Muhly \cite{im}. In the final section, we show that our methods can be used to obtain (a slight variation of) a theorem of Jorgensen on wavelet representations of the Baumslag-Solitar group~\cite{Jor}.

\subsection*{Notation and standing assumptions} We consider an additive countable abelian group $\Gamma$ and its compact dual group $\widehat\Gamma$. We write 
$\int_{\widehat\Gamma}f(k)\,dk$ for the integral of $f$ with respect to normalized Haar measure on $\widehat\Gamma$.  

Throughout the paper, we consider an injective endomorphism $\alpha$ of $\Gamma$ such that $\alpha(\Gamma)$ has finite index $N$ in $\Gamma$ and $\bigcap_{n\geq 0}\alpha^n(\Gamma)=\{0\}$.  We write $\alpha^*$ for the endomorphism $\omega\mapsto \omega\circ\alpha$ of $\widehat\Gamma$; observe that $\alpha^*$ is surjective, that $|\ker\alpha^*|=N$, and that $\bigcup_{n\geq 0}\ker\alpha^{*n}$ is dense in $\widehat\Gamma$. The example to bear in mind is the endomorphism of $\Gamma=\Z$ defined by $\alpha(n)=Nn$, when $\alpha^*$ is the endomorphism $z\mapsto z^N$ of $\T$. To simplify formulas, we sometimes write $(K,\beta)$ for $(\widehat\Gamma,\alpha^*)$.

\section{Wavelet bases in direct limits}\label{abstractbs}

Suppose that $S$ is an isometry on a Hilbert space $H$, and let $(H_\infty,U_n)$ be the Hilbert-space direct limit of the direct system $(H_n,T_n)$ in which each $(H_n,T_n)=(H,S)$. We proved in \cite[Theorem~5]{ijkln} that there is a unitary operator $S_\infty$ on $H_\infty$ characterized by $S_{\infty}U_n=U_nS=U_{n-1}$,
and that the subspaces $V_n$ of $H_\infty$ defined by
\begin{equation}\label{defVn}
V_n:=\begin{cases}
U_n(H)&\text{ if $n\geq 0$}\\
S_\infty^{|n|}(V_0)&\text{ if $n< 0$}
\end{cases}
\end{equation}
satisfy $V_n\subset V_{n+1}$, $\overline{\bigcup_{n\in \Z}V_n}=H_\infty$ and $S_{\infty}(V_{n+1})=V_n$. In addition, we have $\bigcap_{n\in \Z} V_n=\{0\}$ if and only if $S$ is a pure isometry, in which case the subspaces $W_n:=V_{n+1}\ominus V_n$ give an orthogonal decomposition $H_\infty=\bigoplus_{n\in\Z} W_n$. 

Now suppose that $\mu:\Gamma\to U(H)$ is a unitary representation such that $S\mu_\gamma=\mu_{\alpha(\gamma)}S $ for $\gamma\in \Gamma$. Then we proved in \cite[Theorem~5(d)]{ijkln} that there is a representation $\mu_\infty$ of $\Gamma$ on $H_\infty$ characterized by  $\mu_\infty(\gamma)U_n=U_n\mu_{\alpha^n(\gamma)}$; we then have $S_\infty\mu_\infty(\gamma)=\mu_\infty(\alpha(\gamma))S_\infty$, and the triple $(\{V_n\},\mu_\infty,S_\infty^{-1})$ is a generalized multiresolution analysis (GMRA) for $H_\infty$ if and only if $S$ is a pure isometry. 

At this point, we ask what extra input we need to ensure that this GMRA is associated to a wavelet or multiwavelet basis for $H_\infty$.

\begin{prop}\label{genwavelet}
Suppose that $S$ is a pure isometry on $H$. Suppose there are a Hilbert space 
$L$, a unitary representation $\rho:\Gamma\to U(L)$, an orthonormal set $B$ in $L$ such that $\{\rho_\gamma l:l\in B,\gamma\in \Gamma\}$ is an orthonormal basis for $L$, and a unitary isomorphism $S_1$ of $L$ onto $(SH)^\perp$ such that $S_1\rho_\gamma=\mu_{\alpha(\gamma)}S_1$. Then
\begin{equation}\label{genwaveletbasis}
\{S_\infty^{-j}\mu_\infty(\gamma)\psi:j\in \Z, \gamma\in \Gamma, \psi\in U_1S_1(B)\}
\end{equation}
is an orthonormal basis for $H_\infty$.
\end{prop}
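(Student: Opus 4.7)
The plan is to show that $\{\mu_\infty(\gamma)\psi : \gamma\in\Gamma,\;\psi\in U_1S_1(B)\}$ is an orthonormal basis for $W_0=V_1\ominus V_0$, then transport this basis under the unitary $S_\infty^{-j}$ to obtain an orthonormal basis for $W_j$, and finally invoke the orthogonal decomposition $H_\infty=\bigoplus_{j\in\Z}W_j$ coming from pureness of $S$.

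First I would identify $W_0$ concretely. Since the canonical maps $U_n$ of the direct system are isometries and the defining relation $S_\infty U_n=U_{n-1}$ gives $U_0=U_1 S$, we have $V_0=U_0(H)=U_1(SH)\subset U_1(H)=V_1$, and consequently
\[
W_0=V_1\ominus V_0=U_1(H)\ominus U_1(SH)=U_1\bigl((SH)^\perp\bigr).
\]
The given unitary $S_1:L\to (SH)^\perp$ carries the orthonormal basis $\{\rho_\gamma l:l\in B,\;\gamma\in\Gamma\}$ of $L$ to an orthonormal basis of $(SH)^\perp$, and the isometry $U_1$ then carries it to an orthonormal basis $\{U_1S_1\rho_\gamma l:l\in B,\;\gamma\in\Gamma\}$ of $W_0$.

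Next I would rewrite this basis using the two intertwining relations. The hypothesis $S_1\rho_\gamma=\mu_{\alpha(\gamma)}S_1$ and the direct-limit identity $\mu_\infty(\gamma)U_1=U_1\mu_{\alpha(\gamma)}$ combine to give
\[
U_1S_1\rho_\gamma l = U_1\mu_{\alpha(\gamma)}S_1 l = \mu_\infty(\gamma)U_1S_1 l,
\]
so the orthonormal basis of $W_0$ is exactly $\{\mu_\infty(\gamma)\psi:\gamma\in\Gamma,\;\psi\in U_1S_1(B)\}$.

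Finally, since $S_\infty$ is unitary and $S_\infty(V_{n+1})=V_n$, we have $S_\infty^{-j}(W_0)=W_j$ for every $j\in\Z$, so applying $S_\infty^{-j}$ gives an orthonormal basis of $W_j$. Because $S$ is pure, the $W_j$ decompose $H_\infty$ orthogonally, and taking the union of these bases yields the orthonormal basis in \eqref{genwaveletbasis}. There is no real obstacle here beyond bookkeeping: the argument reduces to verifying that the intertwiner $U_1S_1$ matches the translation structures on $L$ and on $W_0$, and this is precisely the content of the two commutation relations assumed in the statement.
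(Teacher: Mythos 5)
Your argument is correct and follows essentially the same route as the paper: identify $W_0=U_1\bigl((SH)^\perp\bigr)$ via $U_0=U_1S$, transport the orthonormal basis of $L$ through $U_1S_1$, use the intertwining relations $S_1\rho_\gamma=\mu_{\alpha(\gamma)}S_1$ and $\mu_\infty(\gamma)U_1=U_1\mu_{\alpha(\gamma)}$ to rewrite it, and then sum over the $W_j=S_\infty^{-j}(W_0)$ using pureness of $S$. Nothing is missing.
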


\begin{proof}
We know that $U_1$ is an isomorphism of $H$ onto $V_1$, and $U_1(SH)=U_0H=V_0$, so $U_1$ is an isomorphism of $(SH)^\perp$ onto $W_0:=V_1\ominus V_0$. Thus $\{U_1S_1\rho_\gamma l:l\in B\}$ is an orthonormal basis for $W_0$. Now $S_{\infty}^{-j}$ maps $W_0$ onto $W_j$, and hence
\begin{equation}\label{foundon}
\{S_\infty^{-j}U_1S_1\rho_\gamma l:j\in \Z, \gamma\in \Gamma, l\in B\}
\end{equation}
is an orthonormal basis for $H_\infty$. But
\[
U_1S_1\rho_\gamma=U_1\mu(\alpha(\gamma))S_1=\mu_\infty(\gamma)U_1S_1,
\]
so \eqref{foundon} is the desired orthonormal basis \eqref{genwaveletbasis}.
\end{proof}

\section{Filters and isometries}\label{filisom}

In this section we will only use the dual endomorphism $\alpha^*$, so we simplify notation by writing $(K,\beta)$ for $(\widehat\Gamma,\alpha^*)$. Recall that $\beta$ is surjective and $N:=|\ker\beta|$ is finite.

A \emph{filter for $\beta$} is a Borel function $m:K\to \C$ such that
\begin{equation}\label{deffilter}
\sum_{a\in\ker\beta} |m(ak)|^2=N\ \text{ for almost all $k\in K$.}
\end{equation}
A \emph{filter bank for $\beta$} consists of Borel functions $m_a:K\to \C$ parametrized by $a\in \ker \beta$ such that
\begin{equation}\label{deffilterbank}
\sum_{d\in \ker\beta} m_a(dk)\overline{m_b(dk)}=\delta_{a,b}N\ \text{ for almost all $k\in K$;}
\end{equation}
Equation~\eqref{deffilterbank} says that the matrix $\big(N^{-1/2}m_a(dk)\big)_{a,d}$ is unitary for almost all $k$; in particular, each $m_a$ is a filter in its own right.

\begin{examples}\label{exsoffilters} (a) In the classical situation, we have $\Gamma=\Z$, $K=\T$, $\beta(z)=z^2$ and $N=2$, and in this case we recover the usual notions of conjugate mirror filter and filter bank with perfect reconstruction. More generally, we could take for $\beta$ the endomorphism of $\T^n$ induced by an integer matrix $B$: $\beta(e^{2\pi ix})=e^{2\pi i Bx}$ for $x\in \R^n$, in which case $N=|\det B|$. 

(b) To get a filter for a more general $\beta\in \End K$, choose characters $\gamma_0,\dots, \gamma_{N-1}$ in $\widehat K$ such that $(\ker \beta)^{\wedge}=\{\gamma_j|_{\ker\beta}: 0\leq j\leq N-1\}$. Then for every unit vector $c=(c_j)$ in $\C^N$,  $m(k):=\sum_{j=0}^{N-1}N^{1/2}c_j\gamma_j(k)$ defines a filter $m$ for $\beta$. To see this we just need to recall that the characters form an orthonormal basis for $\ell^2((\ker \beta)^{\wedge})$, and compute:
\begin{align*}
\sum_{a\in\ker\beta} |m(ak)|^2
&=\sum_{a\in\ker\beta} \sum_{i,j=1}^{N-1} Nc_i\gamma_i(ak)\overline{c_j
\gamma_j(ak)}\\
&=\sum_{i,j=0}^{N-1} Nc_i\gamma_i(k)\overline{c_j\gamma_j(k)}\Big(\sum_{a\in\ker\beta}\gamma_i(a)\overline{\gamma_j(a)}\Big)\\
&=\sum_{j=0}^{N-1} N|c_j|^2|\gamma_j(k)|^2,
\end{align*}
which is $N$ because $\gamma_j(k)\in \T$ and $c$ is a unit vector.

(c) To construct filter banks, we generalize a method from \cite{gopbur}. Choose an orthonormal basis $c_a=(c_{a,j})$ for $\C^{N}$, and take $m_a(k)=\sum_{j=0}^{N-1}N^{1/2}c_{a,j}\gamma_j(k)$. Then, as in the previous calculation,
\[
\sum_{d\in \ker\beta} m_a(dk)\overline{m_b(dk)}
=\sum_{i,j=0}^{N-1} Nc_{a,i}\gamma_i(k)\overline{c_{b,j}\gamma_j(k)}
\Big(\sum_{a\in\ker\beta}\gamma_i(a)\overline{\gamma_j(a)}\Big)=N(c_a\,|\,c_b).
\]
\end{examples}

The next lemma is well-known in special cases (see \cite{bj}, for example).

\begin{prop}\label{SmaCuntz} \textnormal{(a)} If $m$ is a filter for $\beta$, then the formula $(S_mf)(k)=m(k)f(\beta(k))$ defines an isometry $S_m$ on $L^2(K)$. 

\textnormal{(b)} If $\{m_a:a\in \ker \beta\}$ is a filter bank for $\beta$, then $\{S_{m_a}:a\in \ker\beta\}$ satisfies the Cuntz relation
\[
\sum_{a\in \ker\beta}S_{m_a}S_{m_a}^*=1.
\]
\end{prop}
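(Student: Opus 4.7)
My plan is to reduce both parts to a single disintegration identity for Haar measure on~$K$. Since $\beta$ is a surjective continuous endomorphism of the compact group $K$ with $|\ker\beta|=N$, the averaging $\bar h(k):=\frac{1}{N}\sum_{a\in\ker\beta}h(ak)$ is $\ker\beta$-invariant and so descends through $\beta$; combining this with the facts that Haar measure is translation invariant and that $\beta$ pushes Haar forward to Haar yields
\[
\int_K h(k)\,dk \;=\; \int_K \tilde h(k')\,dk', \qquad \tilde h(k')\;=\;\frac{1}{N}\sum_{l\in\beta^{-1}(k')}h(l),
\]
for any integrable $h:K\to\C$. This is the only analytic input needed; everything else is algebraic.

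For part~(a), I apply the identity to $h(k)=|m(k)|^2|f(\beta(k))|^2$. Because $\beta$ is constant on each fibre, $|f(k')|^2$ pulls out of $\tilde h(k')$, and what remains is $\frac{1}{N}\sum_{a\in\ker\beta}|m(ak)|^2$, which equals~$1$ by the filter condition~\eqref{deffilter}. Integrating in $k'$ gives $\|S_mf\|^2=\|f\|^2$.

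For part~(b), I would first use the same disintegration on $\langle S_{m_a}f,g\rangle$ to read off the adjoint
\[
(S_{m_a}^*g)(k')\;=\;\frac{1}{N}\sum_{l\in\beta^{-1}(k')}\overline{m_a(l)}\,g(l),
\]
then compose and sum to obtain
\[
\Bigl(\sum_{a\in\ker\beta} S_{m_a}S_{m_a}^*g\Bigr)(k) \;=\; \frac{1}{N}\sum_{d\in\ker\beta}\Bigl(\sum_{a\in\ker\beta} m_a(k)\overline{m_a(dk)}\Bigr)g(dk).
\]
To finish, I would observe that the filter-bank relation~\eqref{deffilterbank} says the matrix $M(k):=\bigl(N^{-1/2}m_a(dk)\bigr)_{a,d}$ has orthonormal rows; since $M(k)$ is a finite square matrix, its columns are then also orthonormal. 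Taking the inner product of the column indexed by the identity of $\ker\beta$ against the column indexed by a general~$d$ yields $\sum_a m_a(k)\overline{m_a(dk)}=N$ when $d$ is the identity and $0$ otherwise, which collapses the outer sum to $g(k)$.

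The only real hurdle is justifying the disintegration formula cleanly; once that is stated, both calculations are bookkeeping, with the one genuine trick being the row-to-column orthonormality switch for the square matrix $M(k)$ in part~(b), which converts the given filter-bank identity into exactly the one needed.
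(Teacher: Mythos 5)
Your proposal is correct and follows essentially the same route as the paper: your disintegration identity is exactly Lemma~\ref{changevbles}, and the subsequent computations for the isometry, the adjoint, and the Cuntz relation match the paper's. The only (welcome) difference is that you make explicit the row-to-column orthonormality switch for the finite square matrix $\bigl(N^{-1/2}m_a(dk)\bigr)_{a,d}$, which the paper uses implicitly by simply declaring that matrix unitary.
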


Part (a) implies that for every filter $m$ we can run the argument of \S\ref{abstractbs} with $S=S_m$; if $S_m$ is pure, we obtain a GMRA for the direct limit $L^2(K)_\infty$. Part (b) implies that for every $a$,  $S_1:=\bigoplus_{b\in\ker\beta,\;b\not=a} S_{m_b}$ is an isometry of $\bigoplus_{b\not=a}L^2(K)$ onto 
\[
(S_{m_a}(L^2(K)))^\perp =(S_{m_a}S_{m_a}^*(L^2(K)))^\perp=\bigoplus_{b\in\ker\beta,\;b\not=a} S_{m_b}S_{m_b}^*(L^2(K));
\]
thus, when a filter $m$ is a member of a filter bank, we can use Proposition~\ref{genwavelet} to generate a multiwavelet basis for $L^2(K)_\infty$. 

To prove Proposition~\ref{SmaCuntz}, we need an elementary lemma. Notice that our countability hypothesis on $\Gamma=\widehat K$ implies that there is always a Borel section $c$ for the surjection $\beta:K\to K$.

\begin{lemma}\label{changevbles}
Suppose that $c:K\to K$ is a Borel map such that $\beta(c(k))=k$ for all $k\in K$. Then for every continuous function $f$ on $K$ we have
\begin{itemize}
\item[(a)] $\int_K f(\beta(k))\,dk=\int_K f(k)\,dk$, and
\smallskip
\item[(b)] $\int_K f(k)\,dk=\int_K N^{-1}\big(\textstyle{\sum_{a\in \ker\beta} f(ac(k))}\big)\,dk$.
\end{itemize}
\end{lemma}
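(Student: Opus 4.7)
The plan is to deduce both parts from uniqueness of Haar measure and translation invariance.

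For part (a), I would observe that the pushforward $\beta_*\mathrm{Haar}$ is again a Borel probability measure on $K$. Since $\beta$ is a surjective group homomorphism, for any $y\in K$ we can pick $y_0\in\beta^{-1}(\{y\})$ and write $\beta^{-1}(E+y)=y_0+\beta^{-1}(E)$, so translation invariance of Haar measure implies translation invariance of $\beta_*\mathrm{Haar}$. By uniqueness of normalized Haar measure on the compact abelian group $K$, we conclude $\beta_*\mathrm{Haar}=\mathrm{Haar}$, which is exactly the statement $\int_K f(\beta(k))\,dk=\int_K f(k)\,dk$.

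For part (b), I would apply part (a) to the bounded Borel function
\[
F(k):=N^{-1}\sum_{a\in\ker\beta}f(ac(k)),
\]
obtaining $\int_K F(k)\,dk=\int_K F(\beta(k))\,dk$. The key computation is to simplify $F(\beta(k))$. Since $\beta(c(\beta(k)))=\beta(k)$, the elements $k$ and $c(\beta(k))$ lie in the same $\ker\beta$-coset, so there is an $a_0\in\ker\beta$ (depending on $k$) with $k=a_0\, c(\beta(k))$. As $a$ ranges over $\ker\beta$, so does $aa_0^{-1}$, and hence
\[
F(\beta(k))=N^{-1}\sum_{a\in\ker\beta}f(ac(\beta(k)))=N^{-1}\sum_{a\in\ker\beta}f(ak).
\]
Then translation invariance of Haar measure gives $\int_K f(ak)\,dk=\int_K f(k)\,dk$ for each $a$, and averaging over the $N$ elements of $\ker\beta$ yields $\int_K F(\beta(k))\,dk=\int_K f(k)\,dk$, which combined with the identity from part (a) is exactly (b).

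The only point that needs a little care is that the section $c$ is merely Borel, so $F$ is Borel but not generally continuous; however, (a) extends from continuous to bounded Borel functions by standard monotone-class arguments (or one can simply observe that the proof of (a) works verbatim for bounded Borel $f$), so applying it to $F$ is legitimate. There is no real obstacle beyond this bookkeeping: both statements reduce to translation invariance and uniqueness of Haar measure on $K$.
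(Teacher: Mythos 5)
Your proof is correct, and for part (a) it is essentially the paper's argument recast in the language of pushforward measures rather than invariant integrals: both come down to the fact that surjectivity of $\beta$ makes $f\mapsto\int_K f(\beta(k))\,dk$ a normalized translation-invariant integral, hence the Haar integral. For part (b) your route differs in a way that is worth pointing out. The paper interchanges the sum with the integral and then manipulates each term $\int_K f(ac(k))\,dk$ separately, in effect asserting that this single term equals $\int_K f(k)\,dk$ for each fixed $a$; read literally, that termwise statement is false for a general Borel section (take $K=\T$, $\beta(z)=z^2$, $c(e^{2\pi it})=e^{\pi it}$ for $t\in[0,1)$, and $f$ supported on the lower half-circle: then $\int_K f(c(k))\,dk=0$ while $\int_K f(k)\,dk$ need not vanish). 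What makes (b) true is precisely the averaging over the whole fibre, and your argument keeps that average intact: you apply (a) once to the fibre average $F(k)=N^{-1}\sum_{a}f(ac(k))$, and then use the coset identity $\{ac(\beta(k)):a\in\ker\beta\}=\{ak:a\in\ker\beta\}$ to rewrite $F(\beta(k))=N^{-1}\sum_{a}f(ak)$, after which ordinary translation invariance of Haar measure finishes the proof. This is the airtight way to organize the computation. Your closing remark is also well taken: $F$ is only Borel, so (a) must be known for bounded Borel functions, which indeed follows at once from the identity $\beta_*(\mathrm{Haar})=\mathrm{Haar}$ of measures established in your proof of (a); the paper's proof of (b) uses the same extension implicitly without comment.
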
 

\begin{proof} For (a), we define $I(f):=\int_K f(\beta(k))\,dk$. Since $\beta$ is surjective, it follows easily from the translation invariance of Haar measure on $K$ that $I$ is also a translation-invariant integral on $K$; since $I(1)=1$, it must be the Haar integral, and (a) follows.

For (b), we use (a) to simplify the right-hand side:
\begin{align*}
\int_K N^{-1}\big(\textstyle{\sum_{a\in \ker\beta} f(ac(k))}\big)\,dk
&=\sum_{a\in \ker\beta}\int_K N^{-1}f(ac(k))\,dk\\
&=\sum_{a\in \ker\beta}\int_K N^{-1}f(\beta(ac(k)))\,dk\\
&=\sum_{a\in \ker\beta}\int_K N^{-1}f(k)\,dk,
\end{align*}
which since $N=|\ker\beta|$ gives (b).
\end{proof}

\begin{proof}[Proof of Proposition~\ref{SmaCuntz}]
To see that $S_m$ is an isometry, we compute using part (b) of Lemma~\ref{changevbles}:
\begin{align}
\label{Smisometric}\|S_mf\|^2&=\int_K |m(k)f(\beta(k))|^2\,dk\\
&=\int_K N^{-1}\big(\textstyle{\sum_{a\in \ker\beta} |m(ac(k))f(\beta(ac(k)))|^2}\big)\,dk\\
\notag&=\int_K N^{-1}\big(\textstyle{\sum_{a\in \ker\beta} |m(ac(k))|^2}\big)|f(k)|^2\,dk,
\end{align}
which by the filter equation \eqref{deffilter} is precisely $\|f\|^2$.

For (b), we use Lemma~\ref{changevbles}(b) again to check that
\[
(S_{m_a}^*f)(k)=N^{-1}\sum_{d\in \ker\beta}\overline{m_a(dc(k))}f(dc(k))=N^{-1}\sum_{\beta(l)=k}\overline{m_a(l)}f(l),
\]
compute
\[
(S_{m_a}S_{m_a}^*f)(k)=m_a(k)N^{-1}\sum_{\beta(l)=\beta(k)}\overline{m_a(l)}f(l)=
m_a(k)N^{-1}\sum_{d\in\ker\beta}\overline{m_a(dk)}f(dk),
\]
and add to get
\[
\sum_{a\in \ker\beta}(S_{m_a}S_{m_a}^*f)(k)=N^{-1}\sum_{d\in\ker\beta}\Big(\sum_{a\in \ker\beta} m_a(k)\overline{m_a(dk)}\Big)f(dk).
\]
Now the term in brackets is the inner product of two columns of the unitary matrix $(m_a(dk))_{a,d}$, and hence vanishes unless $d=1$, in which case we are left with $N^{-1}Nf(k)$.  
\end{proof}

\section{When $S_m$ is a pure isometry}\label{sec:larry}

A crucial hypothesis in the general theory of \S\ref{abstractbs} is that the isometry $S$ is pure. Our next theorem gives easily verifiable criteria which imply that an isometry of the form $S_m$ is pure. We stress that this is not an elementary fact: the proof uses results from \cite{ijkln} which rely on the reverse martingale convergence theorem.

\begin{theorem}\label{larryspurecond}
Suppose that $B$ is a Borel subset of $\widehat \Gamma$ and $m:\widehat\Gamma\to \C$ is a Borel function such that
\begin{equation}\label{genfiltereq}
\sum_{\alpha^*(\zeta)=\omega}|m(\zeta)|^2=N\chi_B(\omega)\ \text{ for almost all $\omega\in\widehat\Gamma$, }
\end{equation}
and define $S_m:L^2(B)\to L^2(B)$ by $(S_mf)(\omega)=m(\omega)f(\alpha^*(\omega))$. If either 
\begin{enumerate}
\item[\textnormal{(a)}] $\widehat\Gamma\backslash B$ has positive Haar measure, or 
\smallskip
\item[\textnormal{(b)}]
$|m(\omega)|\not=1$ on a set of positive measure,
\end{enumerate} 
then $S_m$ is a pure isometry.
\end{theorem}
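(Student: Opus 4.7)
The goal is to prove that $\bigcap_{n\ge 0} S_m^n(L^2(B))=\{0\}$, which by the Wold decomposition is equivalent to purity of $S_m$. Suppose $f$ lies in this intersection; for each $n\ge 1$ choose $g_n\in L^2(B)$ with $f = S_m^n g_n$. Iterating the definition of $S_m$ yields
\[
|f(\omega)|^2 = P_n(\omega)\,|g_n((\alpha^*)^n\omega)|^2 \quad \text{a.e., where } P_n(\omega) := \prod_{k=0}^{n-1}|m((\alpha^*)^k\omega)|^2,
\]
and $\|g_n\|=\|f\|$ by the isometry of $S_m$ (Proposition~\ref{SmaCuntz}(a)). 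Since $(\alpha^*)^n$ preserves Haar measure on $\widehat\Gamma$,
\[
\|f\|^2 = \|g_n\|^2 = \int_{\widehat\Gamma} |g_n((\alpha^*)^n\omega)|^2\,d\omega \ge \int_{\{f\neq 0\}} \frac{|f(\omega)|^2}{P_n(\omega)}\,d\omega,
\]
since $P_n>0$ a.e.\ on $\{f\neq 0\}$. It therefore suffices to show that $P_n(\omega)\to 0$ a.e., as then Fatou's lemma forces $\{f\neq 0\}$ to have Haar measure zero.

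To control $P_n$ I would combine an $L^1$-computation with Birkhoff's ergodic theorem. Let $\mu$ denote Haar measure on $\widehat\Gamma$. Iterating Lemma~\ref{changevbles}(b) and applying the filter equation~\eqref{genfiltereq} gives $\int_{\widehat\Gamma} |m|^2\,d\omega = \mu(B)$, so Jensen's inequality for the concave function $\log$ yields
\[
\int_{\widehat\Gamma}\log|m|^2\,d\omega \le \log\mu(B).
\]
Under hypothesis (a) this is strictly negative because $\mu(B)<1$. Under hypothesis (b), either $\mu(B)<1$ and we reduce to (a), or $\mu(B)=1$ and strict Jensen applies: otherwise $|m|^2$ would be a.e.\ constant equal to $1$, contradicting $|m|\neq 1$ on a set of positive measure. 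The standing hypothesis $\bigcap_n\alpha^n(\Gamma)=\{0\}$ rules out nontrivial periodic points of $\alpha$ in $\Gamma$, which by the standard duality for endomorphisms of compact abelian groups is equivalent to ergodicity of $\alpha^*$ on $(\widehat\Gamma,\mu)$. Birkhoff's ergodic theorem applied to $\log|m|^2$ (bounded above by $\log N$, hence with integral in $[-\infty,0)$) then gives $n^{-1}\log P_n(\omega) \to \int\log|m|^2\,d\omega<0$ a.e., so $P_n(\omega)\to 0$ a.e.

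Combining these ingredients, Fatou's lemma applied to the inequality above gives
\[
\|f\|^2 \ge \int_{\{f\neq 0\}} \liminf_n \frac{|f|^2}{P_n}\,d\omega = +\infty\cdot\mu(\{f\neq 0\}),
\]
forcing $\mu(\{f\neq 0\})=0$ and hence $f=0$. The main obstacle I foresee is the case in which $|m|$ vanishes on a set of positive measure: $\log|m|^2$ is then not integrable and the Birkhoff step needs a truncation argument. A cleaner route is to invoke the sharper identification of $\bigcap_n S_m^n(L^2(B))$ established in \cite{ijkln} via the reverse martingale convergence theorem, which handles this subtlety directly and is almost certainly the path taken in the authors' proof.
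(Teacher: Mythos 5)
Your argument is correct, and it takes a genuinely different route from the paper's. The paper argues by contradiction: a unit vector $f\in\bigcap_n S_m^nL^2(B)$ is fed into Proposition~12 of \cite{ijkln} (whose proof rests on the reverse martingale convergence theorem) to conclude that $f_n:=S_m^{*n}f$ satisfies $|f_n(\beta^n(\omega))|\to 1$ a.e., and then Egorov's theorem combined with the Poincar\'e recurrence theorem forces $|m|\geq 1$ a.e.; the filter equation \eqref{genfiltereq} then gives $|m|=1$ and $\chi_B=1$ a.e., negating both (a) and (b). You instead show directly that the cocycle $P_n=\prod_{k=0}^{n-1}|m\circ\alpha^{*k}|^2$ tends to $0$ a.e.: Jensen's inequality gives $\int\log|m|^2\leq\log\int|m|^2$, the right-hand side is $\log$ of the Haar measure of $B$ by Lemma~\ref{changevbles}(b), so the integral of $\log|m|^2$ is strictly negative under either (a) or (b); the standing hypothesis $\bigcap_n\alpha^n(\Gamma)=\{0\}$ rules out nontrivial $\alpha$-periodic elements of $\Gamma$ and hence, by the usual character criterion for endomorphisms of compact abelian groups, makes $\alpha^*$ ergodic; Birkhoff then yields $n^{-1}\log P_n\to\int\log|m|^2<0$ a.e., and your Fatou argument finishes. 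All the steps check out, including the ergodicity claim (if $\alpha^n(\gamma)=\gamma$ with $n\geq 1$ then $\gamma$ lies in every $\alpha^j(\Gamma)$), and the ``obstacle'' you flag is not a real gap: the pointwise ergodic theorem in its standard form applies to any $g$ with $g^+\in L^1$ (here $\log|m|^2\leq\log N$ a.e.) and gives a.e.\ convergence of the averages to $\int g\in[-\infty,\log N]$, which is all you need. The trade-off between the two proofs is this: yours is self-contained (no reverse martingale convergence, no appeal to \cite{ijkln}) and makes explicit exactly where the standing hypothesis on $\alpha$ enters, whereas the paper's argument needs only measure-preservation of $\beta$ (Poincar\'e recurrence) rather than ergodicity, and its intermediate conclusion $|m|\geq 1$ a.e.\ feeds directly into the remark following the theorem, where the case $|m|\equiv 1$ is analysed further.
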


\begin{proof}
In the language of \cite{ijkln}, the hypothesis on $m$ says that ``$m$ is a filter relative to the multiplicity function $\chi_B:\widehat \Gamma\to \{0,1\}$ and the endomorphism $\beta:=\alpha^*$." We are not assuming that $m$ is a low-pass filter, but that hypothesis is not used in the proof of \cite[Theorem~8]{ijkln} until after Proposition~12. So we know from \cite[\S4]{ijkln} that $S_m$ is an isometry. We will assume that $S_m$ is not pure, and aim to prove that neither (a) nor (b) holds. Saying that $S_m$ is not pure means that $R_\infty:=\bigcap_{n=0}^\infty S_m^nL^2(B)$ is non-zero, and hence that there exists a unit vector $f$ in $R_\infty$. Proposition~12 of \cite{ijkln} implies that the functions $f_n:=S_m^{*n}f$ satisfy
\begin{equation}\label{fromprop12}
f_n(\beta^n(\omega))\to 1\ \text{ as $n\to \infty$ for almost all $\omega\in\widehat\Gamma$.}
\end{equation}
We claim that  $|m(\omega)|\geq 1$ for almost all $\omega$.

To establish this claim, we again suppose not, so that there exists $\epsilon>0$ and a Borel set $C$ of positive (Haar) measure such that $|m(\omega)|\leq 1-\epsilon$ for $\omega\in C$. Let $\delta>0$. Then we can deduce from \eqref{fromprop12} and Egorov's theorem that there exist a Borel set $E\subset C$ and $M\in \N$ such that $E$ has positive measure and
\begin{equation*}
n\geq M\text{ and }\omega\in E\Longrightarrow 1-\delta<|f_n(\beta^n(\omega))|<1+\delta.
\end{equation*}
Lemma~\ref{changevbles} implies that $\beta$ is measure-preserving, so the Poincar{\' e} recurrence theorem (as in \cite[Theorem~2.3.2]{pet}) implies that there is a Borel set $E'\subset E$ such that $E\setminus E'$ has measure zero and 
$\{n\in \N:\beta^n(\omega)\in E'\}$ is infinite for every $\omega\in E'$. Writing $E'=\bigcup_{n=M}^\infty\{\omega\in E':\beta^n(\omega)\in E'\}$ implies that there exists $n\geq M$ such that $F:=\{\omega\in E':\beta^n(\omega)\in E'\}$ has positive measure. In particular, for $\omega\in F$, $\beta^n(\omega)$ belongs to $C$, and 
\begin{align*}
1-\delta&\leq |f_n(\beta^n(\omega))|=|(S_mf_{n+1})(\beta^n(\omega))|\\
&=
|m(\beta^n(\omega))f_{n+1}(\beta^{(n+1)}(\omega))|\\
&\leq (1-\epsilon)(1+\delta).
\end{align*}
Since this is true for every $\delta>0$, we can let $\delta\to 0+$ and deduce that $1\leq 1-\epsilon$, which is a contradiction. 

Thus $|m(\omega)|\geq 1$ for almost all $\omega$, and the left-hand side of the filter equation \eqref{genfiltereq} is $\geq N$ for almost all $\omega$. Since the right-hand side of is $\leq N$, both sides must equal $N$, which implies that $\chi_B(\omega)=1$ and $|m(\omega)|=1$ for almost all $\omega$, so that neither (a) nor (b) holds, as required.
\end{proof}

\begin{remark}
When $B=\widehat\Gamma =\T$, this follows from Theorem~3.1 of \cite{bj}. That theorem also asserts that when $|m|\equiv 1$, the space $R_\infty$ is spanned by a single function $\xi:\T\to\T$, and that $m$ then has the form $m(z)=\lambda\xi(z)\overline{\xi(z^N)}$ for some $\lambda\in \T$. These extra assertions also extend to the general case. 

To see this, we again consider a unit vector $f$ in $R_\infty$, and deduce from the equations $f=S_m^nf_n$ and $|m|\equiv 1$ that
\[
|f(\omega)|=\Big|\prod_{k=0}^{n-1} m(\beta^{k}(\omega))f_n(\beta^{n}(\omega))\Big|=|f_n(\beta^{n}(\omega))|.
\]
Thus $|f(\omega)|=|f(\omega\zeta)|$ for almost all $\omega$ and every $\zeta\in \ker\beta^{n}$. Since the right-regular representation $\rho$ is continuous and $\bigcup_{n\geq 1}\ker\beta^{n}$ is dense in $\widehat \Gamma$, this implies that $\rho_\zeta(|f|)=|f|$ for all $\zeta\in\widehat \Gamma$. The Fourier transform $|f|^\wedge$ then satisfies $\zeta(\gamma)|f|^\wedge(\gamma)=|f|^\wedge(\gamma)$ for all $\zeta\in \widehat\Gamma$ and all $\gamma\in \Gamma$, so $|f|^\wedge(\gamma)=0$ for $\gamma\not=0$, and $|f|$ is constant. 

So $|f|$ is constant for every $f\in R_\infty$. This implies that $R_\infty$ is one-dimensional: if $f,g\in R_\infty$ are non-zero, then $2\Real f\overline g=|f+g|^2-|f|^2-|g|^2$ and $2\Imag f\overline g=|f+ig|^2-|f|^2-|g|^2$ are constant, so $f\overline g$ is constant and $f=(f\overline{g})g/|g|^2$ is a constant multiple of $g$. If we choose a spanning element $\xi$ which is a unit vector, so that $|\xi|\equiv 1$, then $S_m\xi$ is also a unit vector in $R_\infty$. Thus there exists $\lambda\in \T$ such that $S_m\xi= \lambda\xi$, which says that $m(\omega)\xi(\beta(\omega))=\lambda\xi(\omega)$ for almost all $\omega$.
\end{remark}

\section{Identifying the direct limit}\label{sec:id}

The universal property of the direct limit implies that, to identify $H_\infty$ with a given space $K$, we only need to find isometries $R_n:H\to K$ such that $R_{n+1}S=R_n$ and $\bigcup_{n=0}^\infty R_nH$ is dense in $K$. In \cite{gpots}, for example, we applied this strategy to identify $L^2(\T)_\infty$ with $L^2(\R)$ when $S$ is the isometry $S_m$ associated to a quadrature mirror filter on $\T$. If we have a candidate for the unitary $S_\infty$, it is even easier.

\begin{theorem}\label{iddirlim}
Suppose that $\mu:\Gamma\to U(H)$ is a unitary representation, and $S$ is an isometry on $H$ such that $S\mu_\gamma=\mu_{\alpha(\gamma)}S$ for $\gamma \in \Gamma$. Suppose that $\lambda:\Gamma\to U(K)$ is a unitary representation and $D$ is a unitary operator on $K$ such that $D\lambda_\gamma D^*=\lambda_{\alpha(\gamma)}$ for $\gamma\in \Gamma$. If there is an isometry
$R:H\to K$ such that
\[
\textnormal{(a)}\ RS=DR, \ \text{ and }\ \textnormal{(b)}\ R\mu_\gamma=\lambda_\gamma R\text{ for $\gamma\in \Gamma$,}
\]
then there is an isomorphism $R_\infty$ of $H_\infty$ onto the subspace $\overline{\bigcup_{n=0}^\infty D^{-n}R(H)}$ of $K$ such that $R_\infty S_\infty R_\infty^*=D$ and $R_\infty \mu_\infty R_\infty^*=\lambda$. The subspaces $D^{-n}R(H)$ form a GMRA of $R_\infty(H_\infty)$ relative to $D$ and $\lambda$ if and only if $S$ is a pure isometry.
\end{theorem}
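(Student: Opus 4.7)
The plan is to invoke the universal property of the Hilbert-space direct limit recalled in Section~\ref{abstractbs}: a compatible family of isometries out of the constant direct system $(H,S)$ into a target Hilbert space extends uniquely to an isometry on $H_\infty$. So the first step is to introduce the candidate cocone $R_n:=D^{-n}R\colon H\to K$ for $n\geq 0$. Each $R_n$ is an isometry since $D$ is unitary, and the compatibility $R_{n+1}\circ S=R_n$ is immediate from hypothesis (a): $R_{n+1}S=D^{-(n+1)}RS=D^{-(n+1)}DR=D^{-n}R=R_n$. The universal property then produces a unique linear isometry $R_\infty\colon H_\infty\to K$ with $R_\infty U_n=R_n$ for every $n\geq 0$, and its image is precisely the closure of $\bigcup_{n\geq 0}R_nH=\bigcup_{n\geq 0}D^{-n}R(H)$, as required.

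Next I would verify $R_\infty S_\infty=DR_\infty$ on the dense subspace $\bigcup_n U_nH$ of $H_\infty$. Using the identity $S_\infty U_n=U_nS$ recalled in Section~\ref{abstractbs},
\[
R_\infty S_\infty U_n=R_\infty U_nS=R_nS=D^{-n}RS=D^{-n+1}R=DR_n=DR_\infty U_n,
\]
and continuity extends the equality to all of $H_\infty$. In particular, $D$ preserves the range $R_\infty(H_\infty)$ because $DR_\infty(H_\infty)=R_\infty S_\infty(H_\infty)=R_\infty(H_\infty)$, so conjugating yields $R_\infty S_\infty R_\infty^*=D$. For the intertwining with the representations I would first iterate hypothesis $D\lambda_\gamma D^*=\lambda_{\alpha(\gamma)}$ to obtain $\lambda_\gamma D^{-n}=D^{-n}\lambda_{\alpha^n(\gamma)}$, and then combine this with $\mu_\infty(\gamma)U_n=U_n\mu_{\alpha^n(\gamma)}$ from Section~\ref{abstractbs} and hypothesis (b) in the form $R\mu_{\alpha^n(\gamma)}=\lambda_{\alpha^n(\gamma)}R$: both $R_\infty\mu_\infty(\gamma)$ and $\lambda_\gamma R_\infty$ then agree on every $U_nH$, hence everywhere by density.

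For the final statement, observe that the two intertwining identities just established give $R_\infty(V_n)=D^{-n}R(H)$ for every $n\in\Z$: this is the definition for $n\geq 0$, while for $n<0$ it follows from $V_n=S_\infty^{|n|}V_0$ together with the iterated relation $R_\infty S_\infty^{|n|}=D^{|n|}R_\infty$. Because $R_\infty$ is a unitary isomorphism of $H_\infty$ onto its image that intertwines $S_\infty$ with $D$ and $\mu_\infty$ with $\lambda$, all the GMRA axioms (inclusion, density of the union, triviality of the intersection, dilation covariance, and $\lambda$-invariance of the zero-step subspace) transport verbatim from $\{V_n\}$ in $H_\infty$ to $\{D^{-n}R(H)\}$ in $R_\infty(H_\infty)$. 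By the results recalled in Section~\ref{abstractbs}, these axioms are equivalent to purity of $S$, which completes the equivalence.

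The argument is essentially a routine application of universality once the candidate cocone $R_n=D^{-n}R$ is identified; the only bookkeeping that requires care is the index-shift relation $\lambda_\gamma D^{-n}=D^{-n}\lambda_{\alpha^n(\gamma)}$, which must be invoked with the correct power of $\alpha$ when transferring the representation intertwining from level $n$ down to the direct limit.
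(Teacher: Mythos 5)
Your proposal is correct and follows essentially the same route as the paper: define the cocone $R_n=D^{-n}R$, invoke the universal property to obtain $R_\infty$, verify the two intertwining relations on the dense subspaces $U_nH$, and transport the GMRA characterization from $\{V_n\}$ through the unitary $R_\infty$. The only differences are cosmetic — you spell out the index-shift identity $\lambda_\gamma D^{-n}=D^{-n}\lambda_{\alpha^n(\gamma)}$ and the identification $R_\infty(V_n)=D^{-n}R(H)$ for $n<0$ slightly more explicitly than the paper does.
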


\begin{proof}
We define $R_n:H\to K$ by $R_n=D^{-n}R$. Then each $R_n$ is an isometry, and from (a) we have
\[
R_{n+1}S=D^{-(n+1)}RS=(D^{-n}D^{-1})(DR)=D^{-n}R=R_n.
\]
Thus the $R_n$ induce an isometry $R_\infty$ of $H_\infty$ into $K$, and this is a unitary isomorphism onto the subspace $\overline{\bigcup_{n=0}^\infty D^{-n}R(H)}$ of $K$. For each $n\geq 1$ we have
\[
R_\infty S_\infty U_n=R_\infty U_nS=R_nS=R_{n-1}=DD^{-n}R=DR_n=DR_\infty U_n,
\]
so $R_\infty$ intertwines $S_\infty$ and $D$. For $\gamma\in\Gamma$ and $n\geq 0$, we have
\begin{align*}
R_\infty \mu_\infty(\gamma) U_n&=R_\infty U_n\mu_{\alpha^n(\gamma)}=R_n\mu_{\alpha^n(\gamma)}\\
&=D^{-n}R\mu_{\alpha^n(\gamma)}=D^{-n}\lambda_{\alpha^n(\gamma)}R\\
&=\lambda_\gamma D^{-n}R=\lambda_\gamma R_n=\lambda_\gamma R_\infty U_n,
\end{align*}
and this implies that $R_\infty \mu_\infty(\gamma)R_\infty^*=\lambda_\gamma$. The last assertion holds because the subspaces $V_n$ defined by \eqref{defVn} are a GMRA for $\H_\infty$ if and only if $S$ is pure.
\end{proof}

To construct the isometry $R$ when $S$ is the isometry $S_m$ associated to a filter $m$, we use a scaling function $\phi$ for the filter. We illustrate how this works by applying Theorem~\ref{iddirlim} in the classical situation of a dilation by an integer matrix on $\R^n$, thereby showing that the approach taken in \cite{gpots} also covers this situation.

\begin{example}[Classical wavelets]
\label{dilatmatrix}
Let $A\in GL_n(\Z)$ be an integer matrix  such that every eigenvalue $\lambda$ has $|\lambda|>1$, and define $\alpha\in \End\Z^n$ by $\alpha(k)=Ak$ (using multi-index notation). Note that $N:=|\Z^n/A\Z^n|=|\det A|$. The dual endomorphism $\alpha^*$ of $\T^n$ is given on $e^{2\pi ix}:=(e^{2\pi ix_1},\dots,e^{2\pi ix_n})$ by $\alpha^*(e^{2\pi ix})=e^{2\pi iA^tx}$. Suppose that $m:\T^n\to \C$ is a filter which is \emph{low-pass}, in the sense that $m(1)=N^{1/2}$, and is Lipschitz near $1$; suppose also that $m$ is non-vanishing on a suitably large neighbourhood of $1$ (this is Cohen's condition; see \cite[Theorem~1.9]{str}, for example). Theorem~\ref{larryspurecond} implies that $S_m$ is a pure isometry. 

Under our hypotheses on $m$ the infinite product\footnote{The assertions in this sentence are all well-known (see \cite{str}, for example), but it is hard to point to an efficient derivation. They can, however, be deduced from the more general results in \cite[Proposition~3.1]{bjmp} and \cite[Lemma~3.3]{bcm}; there we need to take the multiplicity function to be identically $1$ on $\T^n$, so that the matrix $H$ consists of the single function denoted here by $m$, and observe that in this case the functions $\tilde M^n$ and $M^n$ in \cite[\S3]{bcm} coincide.}
\begin{equation}\label{scalingdef}
\phi(x)=\textstyle{\prod_{n=1}^\infty N^{-1/2}m(e^{2\pi i(A^t)^{-n}x})}
\end{equation}
converges pointwise almost everywhere for $x\in \R^n$ and in $L^2(\R^n)$ to a unit vector $\phi\in L^2(\R^n)$; the limit $\phi$ is continuous near $0$, satisfies $\phi(0)=1$, \begin{gather}
\label{scaling1}N^{1/2}\phi(A^tx)=m(e^{2\pi ix})\phi(x),\ \mbox{ and}\\
\label{scaling2}\sum_{k\in \Z^n}|\phi(x+k)|^2=1
\end{gather}
for almost all $x\in \R^n$.

We now define $R:L^2(\T^n)\to L^2(\R^n)$ by 
\[
(Rf)(x)=f(e^{2\pi ix})\phi(x).
\]
With $B=\prod_{j=1}^n[0,1)$, $\R^n$ is the disjoint union of the sets $B+k$ for $k\in \Z^n$, and
\begin{align*}
\|Rf\|^2&=\sum_{k\in \Z^n}\int_{B}|f(e^{2\pi ix})\phi(x+k)|^2\,dx\\
&=\int_{B}|f(e^{2\pi ix})|^2\big(\textstyle{\sum_{k\in \Z^n}}|\phi(x+k)|^2\big)\,dx\\
&=\|f\|^2
\end{align*} 
by \eqref{scaling2}. Thus $R$ is an isometry. With $(Dg)(x):=N^{1/2}g(A^tx)$, the scaling equation \eqref{scaling1} gives
\[
(RS_mf)(x)=m(e^{2\pi ix})f(e^{2\pi iA^tx})\phi(x)=N^{1/2}f(e^{2\pi iA^tx})\phi(A^tx)=(DRf)(x),
\]
and with $\mu:\Z^n\to U(L^2(\T^n))$ defined by $(\mu_kf)(z)=z^kf(z)$ and $\lambda:\Z^n\to U(L^2(\R^n))$ by $(\lambda_kf)(x)=e^{2\pi i x\cdot k}g(x)$, we can easily check that $R\mu_k=\lambda_kR$. Thus Theorem~\ref{iddirlim} implies that there is an isomorphism $R_\infty$ of $L^2(\T^n)_\infty$ onto the subspace $\overline{\bigcup_{j=0}^\infty D^{-j}R(L^2(\T^n))}$ of $L^2(\R^n)$ which intertwines $(S_\infty,\mu_\infty$) and $(D,\lambda)$. Since $R$ is an isometry, the functions $e_k\phi:x\to e^{2\pi ik\cdot x}\phi(x)$ form an orthonormal basis for $V_0:=R(L^2(\T^n))$, and hence the functions $D^{-j}(e_k\phi)$ form an orthonormal basis for $V_j:=D^{-j}R(L^2(\T^n))$. Thus we can run the standard argument (as on page 212 of \cite{bcm}, for example) to see that $\bigcup V_j$ is dense in $L^2(\R^n)$. We deduce that the subspaces $\{V_j\}$ form a multiresolution analysis for $L^2(\R^n)$.

Now suppose that $m_1:=m$ is part of a filter bank $\{m_w:w\in \ker\alpha^*\}$ parametrized by
\[
\ker\alpha^*=\{w\in\T^n:w=e^{2\pi ix} \text{ for some $x\in \R^n$ such that $A^tx\in \Z^n$}\}.
\]
(It is known that for every filter $m$ there is always a filter bank containing $m$ \cite[page~494]{bow}, but our construction depends on fixing one.) Since $\{S_{m_w}:w\in \ker\alpha^*\}$ is a Cuntz family,
\begin{equation}\label{defS1}
\textstyle{S_1:=\bigoplus_{w\not= 1}S_{m_w}:\bigoplus_{w\not= 1}L^2(\T^n)\to L^2(\T^n)}
\end{equation}
is an isometry with range $(S_mL^2(\T^n))^\perp$. Thus we can apply Proposition~\ref{genwavelet} with $S_1$ given by \eqref{defS1}.
Note that $D^{-1}R$ is an isomorphism of $(S_mL^2(\T^n))^\perp$ onto $W_0:=V_1\ominus V_0$. Let $1_w$ denote the constant function $1$ in the $w$th copy of $L^2(\T^n)$, so that the functions $\{x\mapsto e^{2\pi ik\cdot x}1_w:w\in\ker\alpha^*,\ w\not= 1\}$ form an orthonormal basis for $\bigoplus_{w\not= 1}L^2(\T^n)$, and set
\[
\psi_w(x):=D^{-1}RS_11_w(x)=N^{-1/2}m_w(e^{2\pi i(A^t)^{-1}x})\phi((A^t)^{-1}x).
\]
Proposition~\ref{genwavelet} implies that the functions
\[
\psi_{w,j,k}(x):=N^{j/2}e^{2\pi ik\cdot (A^t)^{j}x}\psi_w((A^t)^jx)
\]
form an orthonormal basis for $L^2(\R^n)$, and the inverse Fourier transforms $\{\check\psi_w:w\in\ker\alpha^*,\ w\not=1\}$ form a multi-wavelet for $L^2(\R^n)$.
\end{example}

\begin{example}
Consider the multiplicity function $\chi_B:\T\to \{0,1\}$ associated to the interval $(-\frac{1}{3}, \frac{1}{3}]$ (or rather to the set $B:=\{e^{2\pi ix}:x\in (-\frac{1}{3}, \frac{1}{3}]\}$). We can check that the function $m:e^{2\pi ix}\mapsto 2^{1/2}\chi_{(-\frac{1}{6}, \frac{1}{6}]}(x)$ satisfies the generalized filter equation \eqref{genfiltereq} with $N=2$, and hence Theorem~\ref{larryspurecond} implies that $S_m:L^2(B)\to L^2(B)$ is a pure isometry. The function $\phi:=\chi_{(-\frac{1}{3}, \frac{1}{3}]}$ satisfies the scaling equation $2^{1/2}\phi(2x)=m(e^{2\pi ix})\phi(x)$, so in parallel with the classical case we define $R:L^2(B)\to L^2(\R)$ by
\[
(Rf)(x)=f(e^{2\pi ix})\chi_{(-\frac{1}{3}, \frac{1}{3}]}(x).
\]
Calculations show that the usual dilation operator defined by $(D\xi)(x)=2^{1/2}\xi(2x)$ satisfies $DR=RS_m$, and that $R$ intertwines the representations $\mu$ and $\lambda$ of $\Z$ defined by $(\mu_nf)(z)=z^nf(z)$ and $(\lambda_n\xi)(x)=e^{2\pi inx}\xi(x)$. The range of $R$ is the subspace $L^2(-\frac{1}{3}, \frac{1}{3}]$ of $L^2(\R)$ consisting of functions which vanish for $|x|>\frac{1}{3}$, and $D^{-n}(L^2(-\frac{1}{3}, \frac{1}{3}])=L^2(-\frac{2^n}{3}, \frac{2^n}{3}]$, so the dominated convergence theorem implies that $\bigcup_{n=0}^\infty D^{-n}R(L^2(B))$ is dense in $L^2(\R)$. Thus Theorem~\ref{iddirlim} implies that the subspaces $D^{-j}R(L^2(B))$ form a GMRA for $L^2(\R)$. 

Since the functions $e_n:x\mapsto e^{2\pi inx}$ form an orthonormal basis for $L^2(-\frac{1}{2}, \frac{1}{2}]$, and since multiplication by $\phi=\chi_{(-\frac{1}{3}, \frac{1}{3}]}$ is the orthogonal projection on $L^2(-\frac{1}{3}, \frac{1}{3}]$, the functions $\lambda_n\phi$ form a Parseval frame for $RL^2(B)=L^2(-\frac{1}{3}, \frac{1}{3}]$. The inverse Fourier transform of $\lambda_n\phi$ is the translate $\check\phi(\cdot -n)$, and hence we have just shown that the inverse Fourier transforms $V_j:=(D^{-j}R(L^2(B)))^\vee$ form a frame multiresolution analysis in the sense of \cite{bl} --- indeed, we have just recovered Example 4.10(a) of \cite{bl}. 
\end{example}

\section{Wavelets associated to the Cantor set}\label{sec:cantor}

The characteristic function $\chi_C$ of the middle-third Cantor set in $[0,1]$ satisfies
\begin{equation}\label{selfsimofC}
\chi_C(3^{-1}x)=\chi_C(x)+\chi_C(x-2)\ \text{ for all $x\in \R$.}
\end{equation}
Dutkay and Jorgensen observed in \cite{dutjor42} that this is formally similar to saying that $\chi_C$ satisfies a scaling equation involving the dilation $(Df)(x)=f(3^{-1}x)$ and two translations. The right-hand side can be viewed as convolution with the measure $\delta_0+\delta_2$, which is the inverse Fourier transform of $1+z^2\in L^2(\T)$. So one is led to view $1+z^2$ as a filter, and consider the associated isometry on $L^2(\T)$.

We consider the function $m:\T\to \C$ defined by $m(z)=2^{-1/2}(1+z^2)$; the normalising factor of $2^{-1/2}$ ensures that $m$ satisfies
\begin{equation}\label{m_0cubicfilter}
|m(z)|^2+|m(\omega z)|^2+|m(\omega^2z)|^2=3,
\end{equation}
where $\omega:=e^{2\pi i/3}$ is a cube root of unity, so that $m$ is a filter for multiplication by $3$. Notice that $m$ is \emph{not} low-pass: it satisfies $m(1)=2^{1/2}$ rather than $m(1)=3^{1/2}$. A key point established in \cite{dutjor42} is that when we mimic the classical construction of wavelets on $\R$ using this filter, we wind up in a Hilbert space of functions determined by a measure which is supported on a set of Lebesgue measure $0$. Our goal in this section is to show that our recognition theorem also applies in this situation.

Theorem~\ref{larryspurecond} implies that the operator on $L^2(\T)$ defined by $(S_mf)(z)=m(z)f(z^3)$ is a pure isometry. With $\alpha\in\End\Z$ defined by $\alpha(n)=3n$ and  $\mu:\Z\to U(L^2(\T))$ given by $(\mu_nf)(z)=z^nf(z)$, we have $S_m\mu_n=\mu_{3n}S_m=\mu_{\alpha(n)}S_m$. We want to identify the direct limit $(L^2(\T)_\infty, S_\infty,\mu_\infty)$ using $\phi:=\chi_C$ as scaling function.

When we normalize $m$ by multiplying by $2^{-1/2}$, we need to multiply both sides of the scaling equation \eqref{selfsimofC} by $2^{-1/2}$, and hence the appropriate dilation operator is given by $(Df)(x)=2^{-1/2}f(3^{-1}x)$. Following \cite{dutjor42}, we define 
\[
\CR:=\bigcup\{ 3^{-n}(C+k):k,n\in \Z\},
\]
and let $\nu$ denote the Borel measure on $\CR$ which has $\nu(C)=1$, is invariant for the action of $\Z$ by translation on $\CR$, and satisfies 
\begin{equation}\label{dilatinvofnu}
\int f(x)\,d\nu(x)=2^{-1}\int f(3^{-1}x)\,d\nu(x)\ \text{ for every $f\in L^1(\CR,\nu)$.}
\end{equation}
(See \cite[Proposition~2.4]{dutjor42}.) Thus $D$ is a unitary operator on $L^2(\CR,\nu)$, and the scaling function $\chi_C$ is a unit vector. We define $\lambda: \Z\to U(L^2(\CR,\nu))$ by $(\lambda_nf)(x)=f(x-n)$. A straightforward calculation shows that $D\lambda_n=\lambda_{3n}D$, so that $D\lambda_nD^*=\lambda_{3n}$. 

\begin{prop}\label{idwithL2CR}
The direct limit $(L^2(\T)_\infty, S_\infty,\mu_\infty)$ is isomorphic to $(L^2(\CR,\nu),D,\lambda)$. The subspaces 
\[
V_n=\newspan\{D^{-n}\lambda_k(\chi_C):k\in \Z\}
\]
form an MRA for $L^2(\CR,\nu)$, and $\{\lambda_k(\chi_C):k\in \Z\}$ is an orthonormal basis for $V_0$.
\end{prop}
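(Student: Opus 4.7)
The strategy is to apply Theorem~\ref{iddirlim}, taking as $R:L^2(\T)\to L^2(\CR,\nu)$ the linear extension of $R(e_n):=\chi_{C+n}=\lambda_n\chi_C$, where $e_n(z)=z^n$. Since $\nu$ is $\Z$-invariant with $\nu(C)=1$, and since $(C+n)\cap(C+m)$ is contained in an integer point (which is $\nu$-null by the dilation relation \eqref{dilatinvofnu} applied to a singleton), the family $\{\chi_{C+n}\}_{n\in\Z}$ is orthonormal in $L^2(\CR,\nu)$; so $R$ is an isometry onto $\clsp\{\chi_{C+n}\}$.

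To verify the hypotheses of Theorem~\ref{iddirlim}, I would check both intertwining relations on the Fourier basis. For translation, $R\mu_ne_k=Re_{n+k}=\chi_{C+n+k}=\lambda_nRe_k$, giving $R\mu_n=\lambda_nR$. For the dilation, the self-similarity \eqref{selfsimofC} and $\Z$-translation yield
\[
D\chi_{C+k}(x)=2^{-1/2}\chi_{C+k}(x/3)=2^{-1/2}\bigl(\chi_{C+3k}(x)+\chi_{C+3k+2}(x)\bigr),
\]
while $S_me_k(z)=2^{-1/2}(1+z^2)z^{3k}=2^{-1/2}(e_{3k}+e_{3k+2})(z)$, and so $RS_me_k=2^{-1/2}(\chi_{C+3k}+\chi_{C+3k+2})=DRe_k$, whence $RS_m=DR$. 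Since Theorem~\ref{larryspurecond} already guarantees that $S_m$ is pure, Theorem~\ref{iddirlim} then supplies a unitary $R_\infty$ from $L^2(\T)_\infty$ onto $\overline{\bigcup_{n\geq 0}D^{-n}R(L^2(\T))}$ which intertwines $(S_\infty,\mu_\infty)$ with $(D,\lambda)$ and identifies each $V_n$ with $D^{-n}R(L^2(\T))=\clsp\{D^{-n}\lambda_k\chi_C:k\in\Z\}$; the explicit scaling vector $\chi_C$ with orthonormal translates $\{\lambda_k\chi_C\}=\{R(e_k)\}$ upgrades the resulting GMRA to a genuine MRA, and gives the last assertion of the proposition for free.

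The main obstacle is showing that $R_\infty$ is surjective, i.e., that $\bigcup_{n\geq 0}D^{-n}R(L^2(\T))$ is dense in $L^2(\CR,\nu)$. Since $D^{-n}\chi_{C+k}=2^{n/2}\chi_{3^{-n}(C+k)}$, this reduces to density of $\newspan\{\chi_{3^{-n}(C+k)}:n\geq 0,\,k\in\Z\}$. Three geometric facts drive the argument: (i) the cells refine via $3^{-n}(C+k)=3^{-(n+1)}(C+3k)\sqcup 3^{-(n+1)}(C+3k+2)$, by applying \eqref{selfsimofC} at scale $3^{-n}$; (ii) $\CR$ is already covered by these cells with $n\geq 0$, because each $3^{|n|}(C+k)$ with $n<0$ unfolds into a finite union of integer translates $C+k'$ by iterating $3C=C\cup(C+2)$; and (iii) the $n$-th level cells have diameter $3^{-n}\to 0$. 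Together these show that the increasing $\sigma$-algebras $\mathcal F_n$ generated by the $n$-th level cells exhaust the Borel $\sigma$-algebra of $\CR$ modulo $\nu$-null sets, after which a standard approximation argument yields $L^2$-density and the full proposition.
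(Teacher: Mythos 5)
Your proposal is correct, and its architecture coincides with the paper's: define $R$ on the Fourier basis by $Re_n=\chi_{C+n}$, check orthonormality of the translates (the overlaps $(C+n)\cap(C+m)$ are single integer points, which are $\nu$-null), verify $RS_m=DR$ and $R\mu_n=\lambda_nR$ on the $e_n$, invoke Theorem~\ref{larryspurecond} for pureness and Theorem~\ref{iddirlim} for the intertwining isomorphism, and then reduce everything to the density of $\newspan\{\chi_{3^{-n}(C+k)}:n\geq 0,\,k\in\Z\}$ in $L^2(\CR,\nu)$. The one place you genuinely diverge is in proving that density (the paper isolates this as Lemma~\ref{Samsfix}, precisely because it repairs a gap in Dutkay--Jorgensen's original argument). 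The paper first localizes to a single cell $3^{-N}(C+K)$, translates and dilates it onto $C$, and then observes that the indicators of the cells \emph{contained in} $C$ form a $*$-subalgebra of $C(C)$ separating points, so Stone--Weierstrass gives uniform, hence $L^2(C,\nu)$-, density. You instead use the refinement relation and the shrinking diameters to argue that the cells form a generating ring for the Borel $\sigma$-algebra of $\CR$ and conclude by a martingale/conditional-expectation approximation. Both routes work and rest on the same two geometric facts (dyadic refinement of the cells and diameters $3^{-n}\to 0$); the Stone--Weierstrass route avoids any measure-theoretic generation argument, while yours avoids having to know that the cells are clopen in $C$ so that their indicators are continuous. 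If you write yours out, be slightly careful that $\nu(\CR)=\infty$ and that the level-$n$ cells only cover $3^{-n}(C+\Z)$ rather than all of $\CR$: you should first restrict to functions supported on finitely many cells (as your fact (ii) and a dominated-convergence step permit, exactly as in the paper's first reduction) before applying the approximation on the generated $\sigma$-algebra.
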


To apply Theorem~\ref{iddirlim}, we need an isometry $R:L^2(\T)\to L^2(\CR,\nu)$. This one looks a little different to those in the previous section because the scaling equation in the form \eqref{selfsimofC} involves a convolution rather than a pointwise multiplication in the Fourier domain.

\begin{lemma}\label{defR}
For $n\in \Z$, let $e_n$ denote the function $z\mapsto z^n$. Then there is an isometry $R$ of $L^2(\T)$ into $L^2(\CR,\nu)$ such that $Re_n=\lambda_n\chi_C=\chi_{C+n}$ for $n\in \Z$.
\end{lemma}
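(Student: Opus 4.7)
The plan is to use the fact that $\{e_n : n \in \Z\}$ is an orthonormal basis for $L^2(\T)$, so it suffices to define $R$ on this basis, verify that the images $\{\chi_{C+n}\}$ form an orthonormal set in $L^2(\CR,\nu)$, and then extend by linearity and continuity to an isometry. The identity $\lambda_n\chi_C = \chi_{C+n}$ is immediate from $(\lambda_nf)(x)=f(x-n)$.

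First I would verify that each $\chi_{C+n}$ is a unit vector in $L^2(\CR,\nu)$. This follows directly from the translation invariance of $\nu$:
\[
\|\chi_{C+n}\|_{L^2(\nu)}^2 = \nu(C+n) = \nu(C) = 1.
\]
Next I would check pairwise orthogonality. Since $C\subset [0,1]$, for $m\neq n$ the translates $C+n$ and $C+m$ can meet only at integer points of the form $\max(m,n)$, so $(C+n)\cap (C+m)$ is contained in a finite (in fact at most one-point) subset of $\Z$. Because the restriction of $\nu$ to $C$ is the natural probability measure on the Cantor set (non-atomic, as is clear from the dilation-invariance property \eqref{dilatinvofnu} together with $\nu(C)=1$: a single atom would force atoms at all points $3^{-n}c+k$ with arbitrarily small weight $2^{-n}$, contradicting $\nu(C)=1$), singletons have $\nu$-measure zero, so this intersection is $\nu$-null. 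Therefore $\int \chi_{C+n}\overline{\chi_{C+m}}\,d\nu = \nu\bigl((C+n)\cap (C+m)\bigr) = 0$.

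Having established that $\{\chi_{C+n}:n\in\Z\}$ is an orthonormal set in $L^2(\CR,\nu)$, I would define $R$ on the dense subspace $\newspan\{e_n\}$ by $R(\sum a_ne_n) = \sum a_n\chi_{C+n}$; this is isometric on finite sums by Parseval applied on both sides, hence extends uniquely to an isometry $R:L^2(\T)\to L^2(\CR,\nu)$ with the required property $Re_n = \chi_{C+n}$.

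The only delicate step is justifying that the ``boundary overlaps'' between adjacent translates of $C$ carry no $\nu$-mass, i.e.\ the non-atomicity of $\nu$. Should a cleaner justification be preferred, one can instead invoke \cite[Proposition~2.4]{dutjor42} directly, where the construction of $\nu$ is carried out and it is observed that $\nu$ assigns to each translate $C+n$ the same value $1$ and that the family $\{C+n:n\in\Z\}$ tiles $\CR$ modulo $\nu$-null sets.
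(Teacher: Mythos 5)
Your proof is essentially the paper's: reduce to checking that $\{\chi_{C+n}:n\in\Z\}$ is orthonormal, use translation invariance $\nu(C+n)=\nu(C)=1$ for normalization, dispose of the single-point overlaps of adjacent translates by noting that singletons are $\nu$-null, and extend from the orthonormal basis $\{e_n\}$. The one slip is in your parenthetical justification of non-atomicity: from \eqref{dilatinvofnu} one gets $\nu(3^{-1}A)=2^{-1}\nu(A)$, so an atom of mass $\epsilon$ at $c$ produces atoms of mass $2^{-n}\epsilon$ at the points $3^{-n}c$ --- a sequence of atoms of \emph{shrinking} mass, which does not contradict $\nu(C)=1$. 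To get a contradiction you must scale the other way ($\nu(\{3^n c\})=2^n\epsilon$, with the fractional part of $3^nc$ again in $C$), or, more simply, observe that every point of $C$ lies in one of the $2^n$ disjoint sets $3^{-n}(C+k)$ contained in $C$, each of $\nu$-measure $2^{-n}$, so $\nu(\{c\})\le 2^{-n}$ for every $n$. This is a peripheral and easily repaired point --- the paper itself merely asserts that singletons are null, and your fallback citation of \cite[Proposition~2.4]{dutjor42} also covers it.
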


\begin{proof}
Since $\{e_n:n\in \Z\}$ is an orthonormal basis for $L^2(\T)$, it suffices for us to check that the elements $\lambda_n\chi_C=\chi_{C+n}$ form an orthonormal set in $L^2(\CR,\nu)$. Since singleton sets have $\nu$-measure zero, we can delete $1$ from $C$ without changing the element $\chi_C$ of $L^2(\CR,\nu)$; now the sets $C+n$ are disjoint, so the functions are mutually orthogonal, and since $\nu(C+n)=\nu(C)=1$, each $\chi_{C+n}$ is a unit vector.
\end{proof}

To get surjectivity of our isomorphism $R_\infty$, we need the following lemma\footnote{This result is stated as Proposition~2.8(iii) in \cite{dutjor42}, but there seems to be a gap in the proof. This was observed and fixed independently by Sam Webster and Kathy Merrill. The proof of Lemma~\ref{Samsfix} is similar to the proof in Sam's honours thesis (University of Newcastle, 2006); Kathy's argument is generalized in \cite{dmp}.}.

\begin{lemma}\label{Samsfix}
The functions 
\[
\{\chi_{3^{-n}(C+k)}=2^{-n/2}D^{-n}\lambda_k(\chi_C):n,k\in \Z\}
\]
span a dense subspace of $L^2(\CR,\nu)$.
\end{lemma}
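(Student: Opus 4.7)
The plan is to argue by duality. Let $V := \clsp\{\chi_{3^{-n}(C+k)} : n,k \in \Z\}$ and suppose $g \in V^\perp$; then $\int_{3^{-n}(C+k)} g\,d\nu = \langle \chi_{3^{-n}(C+k)}, g\rangle = 0$ for all $n,k\in\Z$, and the goal is to deduce $g = 0$ $\nu$-a.e.

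First I would record the tree structure of $\mathcal{C} := \{3^{-n}(C+k) : n,k\in\Z\}$. Iterating the (essentially disjoint) self-similarity identity $C = \tfrac{1}{3}C \sqcup \tfrac{1}{3}(C+2)$ gives, for each $m\geq 0$,
\[
3^{-n}(C+k) = \bigsqcup_{d\in D_m} 3^{-n-m}(C + d + 3^m k),\qquad D_m := \Big\{\textstyle\sum_{i=0}^{m-1}a_i3^i : a_i\in\{0,2\}\Big\}.
\]
Consequently any two members of $\mathcal{C}$ are either $\nu$-essentially disjoint or one contains the other, so $\mathcal{C}\cup\{\emptyset\}$ is a $\pi$-system.

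Next I would fix a single $A_0 = 3^{-n_0}(C+k_0)\in\mathcal{C}$, which has $\nu(A_0) = 2^{-n_0} < \infty$, and show that $g$ vanishes $\nu$-a.e.\ on $A_0$. Since $g|_{A_0}\in L^1(A_0,\nu)$, the formula $\mu_g(E) := \int_E g\,d\nu$ is a finite complex measure on $\operatorname{Borel}(A_0)$. Set $\mathcal{P} := \{A\in\mathcal{C}:A\subset A_0\}$ and $\mathcal{L} := \{E\in\operatorname{Borel}(A_0):\mu_g(E)=0\}$. By hypothesis $\mathcal{P}\subset\mathcal{L}$; the tree structure makes $\mathcal{P}$ a $\pi$-system that contains $A_0$, and $\mathcal{L}$ is a $\lambda$-system by countable additivity of $\mu_g$. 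The $\pi$--$\lambda$ theorem then gives $\sigma(\mathcal{P})\subset\mathcal{L}$.

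The main obstacle is verifying $\sigma(\mathcal{P}) = \operatorname{Borel}(A_0)$, which is a purely geometric point. For each $m\geq 0$, the refinement above shows that $\mathcal{P}_m := \{3^{-n_0-m}(C+d+3^m k_0):d\in D_m\}\subset\mathcal{P}$ partitions $A_0$ (modulo null sets) into $2^m$ compact pieces, each of diameter at most $3^{-n_0-m}$. Given any open $U\subset A_0$ and $x\in U$, choosing $m$ large enough that $3^{-n_0-m}$ is smaller than the distance from $x$ to $A_0\setminus U$ places the unique member of $\mathcal{P}_m$ containing $x$ inside $U$; running $x$ over a countable dense subset of $U$ writes $U$ as a countable union of members of $\mathcal{P}$, so $U\in\sigma(\mathcal{P})$. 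Hence $\operatorname{Borel}(A_0)\subset\sigma(\mathcal{P})\subset\mathcal{L}$, giving $g = 0$ $\nu$-a.e.\ on $A_0$. Since $\CR = \bigcup_{n,k}3^{-n}(C+k)$ is a countable union of such atoms, $g = 0$ $\nu$-a.e.\ on all of $\CR$, completing the proof.
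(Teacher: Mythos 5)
Your argument is correct, but it takes a genuinely different route from the paper's. The paper first reduces, via dominated convergence, to approximating functions supported in a single set $3^{-N}(C+K)$, and then exploits the fact that the sets $3^{-n}(C+k)$ contained in $C$ are pairwise nested or disjoint to conclude that the span of their characteristic functions is a $*$-subalgebra of $C(C)$ separating points; the Stone--Weierstrass theorem gives uniform density in $C(C)$, and inner regularity of $\nu$ then gives density in $L^2(C,\nu)$. You instead argue by duality: you annihilate the span, form the finite complex measure $E\mapsto\int_E g\,d\nu$ on a fixed atom $A_0$, and invoke the $\pi$--$\lambda$ theorem together with the observation that the cylinder pieces generate the Borel $\sigma$-algebra of $A_0$ because their diameters shrink to zero. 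Both proofs rest on the same geometric input --- the refinement identity $3^{-n}(C+k)=3^{-(n+1)}(C+3k)\cup 3^{-(n+1)}(C+3k+2)$ and the resulting ``nested or disjoint'' tree structure --- but yours trades Stone--Weierstrass and the regularity-based density of $C(C)$ in $L^2(C,\nu)$ for Dynkin's lemma, which is somewhat more self-contained at the measure-theoretic level. One step deserves a touch-up: when you show that a relatively open $U\subset A_0$ lies in $\sigma(\mathcal{P})$, the piece you select for a point $x$ of the countable dense subset lives at a level $m(x)$ determined by the distance from $x$ to $A_0\setminus U$, and such a piece need not contain a given point $y\in U$ even when $x$ happens to lie in the (larger) piece that witnesses $y\in U$; so the union over the dense subset could in principle miss points of $U$. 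The clean fix is to dispense with the dense subset entirely and write $U$ as the union of \emph{all} members of $\mathcal{P}$ contained in $U$: this is automatically a countable union because $\mathcal{P}$ itself is countable, and it exhausts $U$ by exactly your diameter estimate. With that one-line adjustment the proof is complete.
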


\begin{proof}
Since $\CR=\bigcup_{n=0}^\infty 3^{-n}\big(\bigcup_{k\in \Z}(C+k)\big)$ is an increasing union of almost disjoint unions, two applications of the dominated convergence theorem show that it suffices to approximate functions $f$ with support in $3^{-N}(C+K)$ for fixed $N\geq 0$ and $K\in \Z$. Then $\lambda_{-K}D^Nf$ has support in $C$.

We now consider the sets $3^{-n}(C+k)$ which are contained in $C$. For each $n\geq 0$, there are exactly $2^n$ such sets, and they are disjoint; each 
\[
3^{-n}(C+k)=3^{-(n+1)}(C+3k)\cup 3^{-(n+1)}(C+3k+2).
\]
Thus two such sets are either disjoint or one is contained in the other, and 
\[
A:=\newspan\{\chi_{3^{-n}(C+k)}:n\geq 0,\ k\in \Z,\text{ and }3^{-n}(C+k)\subset C\}
\]
is a $*$-subalgebra of $C(C)$; since $A$ contains the characteristic functions of arbitrarily small sets, it separates points of $C$, and hence by the Stone-Weierstrass theorem is uniformly dense in $C(C)$. Since $\nu$ is inner regular and $C$ has finite measure, the restriction of $\nu$ to $C$ is a regular Borel measure, and $C(C)$ is dense in $L^2(C,\nu)$. Thus we can find a function $g$ in 
\[
\newspan\{\chi_{3^{-n}(C+k)}:n,k\in \Z\}=\newspan\{D^{-n}\lambda_k(\chi_C):n,k\in \Z\}
\]
such that $\|\lambda_{-K}D^Nf-g\|$ is small. Since $\lambda_K$ and $D^{-N}$ are unitary, $\|f-D^{-N}\lambda_Kg\|$ is also small. But
\[
D^{-N}\lambda_K(D^{-n}\lambda_k(\chi_C))=D^{-(N+n)}\lambda_{3^nK+k}(\chi_C),
\]
so $D^{-N}\lambda_Kg$ has the required form.
\end{proof}

\begin{proof}[Proof of Proposition~\ref{idwithL2CR}]
We next check that $RS_m=DR$ (equation (a) of Theorem~\ref{iddirlim}). For each $n\in \Z$, we have
\[
(DRe_n)(x)=(D\chi_{C+n})(x)=2^{-1/2}\chi_{C+n}(3^{-1}x)=2^{-1/2}\chi_{C}(3^{-1}(x-3n)),
\]
which in view of the scaling equation \eqref{selfsimofC} gives
\[
(DRe_n)(x)=2^{-1/2}\big(\chi_C(x-3n)+\chi_C(x-3n-2)\big)=R(2^{-1/2}(e_{3n}+e_{3n+2}))(x).
\]
Since 
\[
(S_me_n)(z)=2^{-1/2}(1+z^2)e_n(z^3)=2^{-1/2}(1+z^2)(z^{3n})=2^{-1/2}(e_{3n}+e_{3n+2})(z), 
\]
we deduce that $RS_m$ and $DR$ agree on the basis elements $e_n$, and hence are equal.

To check the hypothesis (b) of Theorem~\ref{iddirlim}, observe that $\mu_ne_k=e_{k+n}$. Thus for $n,k\in \Z$ we have
\[
(R\mu_n)e_k=Re_{n+k}=\chi_{C+n+k}=\lambda_n(\chi_{C+k})=(\lambda_nR)e_k.
\]
Now Theorem~\ref{iddirlim} gives an isometry $R_\infty$ of $(L^2(\T)_\infty, S_\infty,\mu_\infty)$ into $(L^2(\CR,\nu),D,\lambda)$. Since the range of $R$ contains the vectors $\lambda_n(\chi_C)$, it follows from Lemma~\ref{Samsfix} that $\bigcup_{n\geq 0} D^{-n}(R(L^2(\T)))$ is dense in $(L^2(\CR,\nu),D,\lambda)$, and the result follows. 
\end{proof}

To get a wavelet basis for $L^2(\CR,\nu)$, we observe that $m_0=m$ and $m_1(z)=z$, $m_2(z)=2^{-1/2}(1-z^2)$ form a filter bank: with $\omega=\exp(2\pi i/3)$, the matrix
\[
3^{-1/2}\begin{pmatrix}
m_0(z)&m_1(z)&m_2(z)\\
m_0(\omega z)&m_1(\omega z)&m_2(\omega z)\\
m_0(\omega^2 z)&m_1(\omega^2 z)&m_2(\omega^2 z)
\end{pmatrix}
\]
is unitary for every $z\in \T$. Proposition~\ref{SmaCuntz} implies that the operators $T_i:=T_{m_i}$ on $L^2(\T)$ form a Cuntz family with $T_0=S_m$, and $S_m$ is pure by Theorem~\ref{larryspurecond}. Thus the operator $S_1:L^2(\T)\oplus L^2(\T)\to L^2(\T)$ defined by $S_1(f,g)=T_1f+T_2g$ is a unitary isomorphism of $L:=L^2(\T)\oplus L^2(\T)$ onto the complement $(S_m(L^2(\T)))^\perp$, and the hypotheses of Proposition~\ref{genwavelet} are satisfied with $B=\{(1,0),(0,1)\}$ and $\rho=\mu\oplus\mu$. We deduce that
the set 
\[
\{U_1S_1(1,0),U_1S_1(0,1)\}=\{U_1T_11,U_1T_21\}=\{U_1m_1,U_1m_2\}
\] 
generates a wavelet basis
\[
\{S_\infty^{-j}\mu_\infty(k)U_1m_i:j\in\Z, k\in\Z, i=1,2\}
\]
for $L^2(\T)_\infty$. 

Applying the isomorphism $R_\infty$ gives an orthonormal basis
\[
\{D^{-j}\lambda_kR_\infty U_1m_i:j\in\Z, k\in\Z, i=1,2\}
\]
for $L^2(\CR,\nu)$. Let
\[
\psi_i(x)=R_\infty(U_1m_i)(x)=(R_1m_i)(x)=(D^{-1}Rm_i)(x)=2^{1/2}(Rm_i)(3x);
\]
in terms of the basis $e_n$ for $L^2(\T)$ used to define $R$ in Lemma~\ref{defR}, we have $m_1=e_1$ and $m_2=2^{-1/2}(e_0-e_2)$, so 
\begin{align*}
\psi_1(x)&=2^{1/2}\chi_{C+1}(3x)=2^{1/2}\chi_{3^{-1}(C+1)}(x),\text{ and }\\ 
\psi_2(x)&=2^{1/2}(2^{-1/2}\chi_C-2^{-1/2}\chi_{C+2})(3x)=\chi_{3^{-1}C}-\chi_{3^{-1}(C+2)}(x).
\end{align*}
Thus we recover the following theorem of Dutkay and Jorgensen \cite{dutjor42}:

\begin{theorem}\label{dutjorwavelet}
Let $\psi_1=2^{1/2}\chi_{3^{-1}(C+1)}$ and $\psi_2=\chi_{3^{-1}C}-\chi_{3^{-1}(C+2)}$. Then 
\[
\{\psi_{i,j,k}(x)=2^{j/2}\psi_i(3^jx-k):i=1,2,j\in\Z,k\in\Z\}
\]
is an orthonormal basis for $L^2(\CR,\nu)$. 
\end{theorem}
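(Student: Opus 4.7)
The work in the two paragraphs preceding the theorem statement has essentially assembled all the ingredients; my plan is to organize them into a clean argument.

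First, I would confirm that $m_0, m_1, m_2$ constitute a filter bank for $\beta(z) = z^3$, which amounts to unitarity of the $3 \times 3$ matrix displayed above the theorem. By Proposition~\ref{SmaCuntz} the isometries $T_i := S_{m_i}$ on $L^2(\T)$ form a Cuntz family with $T_0 = S_m$, and Theorem~\ref{larryspurecond} ensures $S_m$ is pure since $|m|$ is not identically $1$. Hence $S_1(f,g) := T_1 f + T_2 g$ is a unitary isomorphism of $L := L^2(\T) \oplus L^2(\T)$ onto $(S_m L^2(\T))^\perp$ intertwining $\mu \oplus \mu$ with $\mu \circ \alpha$. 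Applying Proposition~\ref{genwavelet} with $\rho = \mu \oplus \mu$ and $B = \{(1,0),(0,1)\}$, and noting that $S_1(1,0) = T_1 \mathbf 1 = m_1$ and $S_1(0,1) = T_2 \mathbf 1 = m_2$, produces the orthonormal basis $\{S_\infty^{-j}\mu_\infty(k)\,U_1 m_i : j,k \in \Z,\; i = 1,2\}$ for $L^2(\T)_\infty$.

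Second, I would transport this basis to $L^2(\CR,\nu)$ using the unitary $R_\infty$ of Proposition~\ref{idwithL2CR}, which intertwines $(S_\infty, \mu_\infty)$ with $(D, \lambda)$. This yields the orthonormal set $\{D^{-j}\lambda_k \psi_i\}$ for $\psi_i := R_\infty U_1 m_i$. A short calculation, using $R_\infty U_1 = R_1 = D^{-1}R$, the identities $m_1 = e_1$ and $m_2 = 2^{-1/2}(e_0 - e_2)$, and the defining property $R e_n = \chi_{C+n}$ from Lemma~\ref{defR}, gives the stated formulas
\[
\psi_1 = 2^{1/2}\chi_{3^{-1}(C+1)}, \qquad \psi_2 = \chi_{3^{-1}C} - \chi_{3^{-1}(C+2)}.
\]

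Finally, I would match the basis $\{D^{-j}\lambda_k \psi_i\}$ with the family $\{\psi_{i,j,k}\}$ of the theorem. Since $(Df)(x) = 2^{-1/2}f(3^{-1}x)$, inverting and iterating gives $(D^{-j}f)(x) = 2^{j/2}f(3^j x)$, and combining with $(\lambda_k f)(x) = f(x-k)$ produces $(D^{-j}\lambda_k \psi_i)(x) = 2^{j/2}\psi_i(3^j x - k) = \psi_{i,j,k}(x)$, completing the identification. The genuine obstacle, establishing that $S_m$ is pure and identifying $L^2(\T)_\infty$ with $L^2(\CR,\nu)$ equivariantly, has already been overcome in the preceding sections; what remains for the theorem itself is the bookkeeping above.
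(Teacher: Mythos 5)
Your proposal is correct and follows essentially the same route as the paper: verify the filter bank, invoke Proposition~\ref{SmaCuntz} and Theorem~\ref{larryspurecond}, apply Proposition~\ref{genwavelet} with $S_1(f,g)=T_1f+T_2g$, $\rho=\mu\oplus\mu$ and $B=\{(1,0),(0,1)\}$, and then transport the resulting basis through $R_\infty$ using $R_\infty U_1=D^{-1}R$ and $Re_n=\chi_{C+n}$. The computations of $\psi_1$, $\psi_2$ and the identification $(D^{-j}\lambda_k\psi_i)(x)=2^{j/2}\psi_i(3^jx-k)$ all match the paper's argument.
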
 

\begin{example}
More generally, one can form a one-parameter family of multi-wavelets corresponding to dilation and translation on the filled-out Cantor set $\CR$.  For $r$ satisfying $|r|\leq 2^{-1/2}$ set $m_0(z)=2^{-1/2}(1+z^2)$, as above, and take
\begin{align*}
m_{1,r}(z)&:=-((1-2r^2)/2)^{1/2}+2^{1/2}rz+((1-2r^2)/2)^{1/2}z^2,\\
m_{2,r}(z)&:=r+(1-2r^2)^{1/2}z-rz^2.
\end{align*}
The remarks made in Example~\ref{exsoffilters}(c) imply that $\{m_0,m_{1,r},m_{2,r}\}$ is a filter bank, and the above argument shows that the pair 
\begin{align*}
\psi_{1,r}&:=-(1-2r^2)^{1/2}\chi_{3^{-1}C}+2r\chi_{3^{-1}(C+1)}+(1-2r^2)^{1/2}\chi_{3^{-1}(C+2)}\\
\psi_{2,r}&:=2^{1/2}\big(r\chi_{3^{-1}C}+(1-2r^2)^{1/2}\chi_{3^{-1}(C+1)}-r\chi_{3^{-1}(C+2)}\big)
\end{align*}
is a multi-wavelet for dilation by $3$ on $L^2({\mathcal R}, \nu)$; to recover Theorem~\ref{dutjorwavelet}, take $r=2^{-1/2}$.

There is also a version of Theorem~\ref{dutjorwavelet} which starts from the characteristic function of the Sierpinski gasket (see \cite{dmp}).
\end{example}

\section{Functions on solenoids}

Suppose that  $m:\widehat\Gamma\to \C$ is a filter for $\alpha^*\in \End\widehat\Gamma$. Then the representation $\mu:\Gamma\to U(L^2(\widehat\Gamma))$ defined by $(\mu_\gamma f)(\zeta)=\zeta(\gamma)f(\zeta)$ satisfies $S_m\mu_\gamma=\mu_{\alpha(\gamma)}S_m$. Thus the direct limit construction of \S\ref{abstractbs} gives a direct limit $(L^2(\widehat\Gamma)_\infty,U_n)$ together with a dilation $S_\infty$ and a representation $\mu_\infty$ of $\Gamma$ on $L^2(\widehat\Gamma)_\infty$ such that $S_\infty U_n=U_nS_m$ and $S_\infty\mu_\infty(\gamma)=\mu_\infty(\alpha(\gamma))S_\infty$. We want to identify this direct limit with an $L^2$-space of functions on the solenoid $\CS_{\alpha^*}:=\varprojlim(\widehat\Gamma,\alpha^*)$; this is motivated by previous work of Jorgensen \cite{Jor} and Dutkay \cite[\S5.2]{dut}, where $\Gamma=\Z$, $\alpha$ is multiplication by $N$, and $\CS_{\alpha^*}$ is the usual solenoid $\CS_N:=\varprojlim(\T, z\mapsto z^N)$. Then, as applications of our result, we will rederive a theorem of Dutkay on a ``Fourier transform'' for the Cantor set, and settle a question of Ionescu and Muhly about the support of the measure on the solenoid when $m$ is a low-pass filter.

To define the $L^2$-space on the solenoid, we need some background material on measures on solenoids. The first lemma is a modern formulation of a classical result (see, for example, \cite[Proposition~27.8]{parth}).

\begin{lemma}\label{abstractmeas}
Suppose that $r_n:T_{n+1}\to T_n$ is an inverse system of compact spaces with each $r_n$ surjective, and $\mu_n$ is a family of measures on $T_n$ such that $\mu_0$ is a probability measure and 
\begin{equation}\label{consistency}
\int (f\circ r_n)\,d\mu_{n+1}=\int f\,d\mu_n\ \text{ for $f\in C(T_n)$.}
\end{equation} 
Let $T_\infty=\varprojlim (T_n,r_n)$, and denote the canonical map from $T_\infty$ to $T_n$ by $\pi_n$. Then there is a unique probability measure $\mu$ on $T_\infty$ such that 
\[
\int (f\circ \pi_n)\,d\mu=\int  f\,d\mu_n\ \text{ for $f\in C(T_n)$.}
\]
\end{lemma}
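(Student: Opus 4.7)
The plan is to apply the Riesz representation theorem to a positive linear functional defined on the subalgebra of $C(T_\infty)$ consisting of functions pulled back from some finite stage. First I would observe that iterating \eqref{consistency} with $f = 1$ shows each $\mu_n$ is a probability measure, and then set $\mathcal{A} := \bigcup_{n \geq 0} \{f \circ \pi_n : f \in C(T_n)\}$ and try to define $\Lambda : \mathcal{A} \to \C$ by $\Lambda(f \circ \pi_n) := \int f \, d\mu_n$. The first nontrivial task is well-definedness: if $f \circ \pi_n = g \circ \pi_m$ with $n \leq m$, I need $\int f \, d\mu_n = \int g \, d\mu_m$. This uses that $\pi_m : T_\infty \to T_m$ is surjective (a standard inverse-limit argument: surjectivity of each $r_n$, together with compactness, makes the relevant cofinal intersections nonempty); then $f \circ \pi_n = g \circ \pi_m$ forces $f \circ r_n \circ r_{n+1} \circ \cdots \circ r_{m-1} = g$ on $T_m$, and iterating \eqref{consistency} yields the required equality of integrals.

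Next I would check that $\mathcal{A}$ is a unital $*$-subalgebra of $C(T_\infty)$ separating points. Given two elements $f \circ \pi_n$ and $g \circ \pi_m$ with $n \leq m$, I push the first to stage $m$ by writing $f \circ \pi_n = (f \circ r_n \circ \cdots \circ r_{m-1}) \circ \pi_m$; then sums, products, and conjugates lie in $\mathcal{A}$ because they are pullbacks from $T_m$. The constant function $1$ lies in $\mathcal{A}$ trivially. For point separation: if $x \neq y$ in $T_\infty$, then $\pi_n(x) \neq \pi_n(y)$ for some $n$, and a continuous function on $T_n$ separating these two points pulls back to one in $\mathcal{A}$ separating $x$ and $y$. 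Stone-Weierstrass then gives that $\mathcal{A}$ is uniformly dense in $C(T_\infty)$.

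To extend $\Lambda$, I would verify positivity and boundedness on $\mathcal{A}$. If $f \circ \pi_n \geq 0$ on $T_\infty$, then surjectivity of $\pi_n$ gives $f \geq 0$ on $T_n$, so $\Lambda(f \circ \pi_n) = \int f \, d\mu_n \geq 0$. The bound $|\Lambda(h)| \leq \|h\|_\infty$ for $h = f \circ \pi_n \in \mathcal{A}$ follows from $\|f\|_\infty = \|h\|_\infty$ (again by surjectivity of $\pi_n$) and $\mu_n(T_n) = 1$. Hence $\Lambda$ extends uniquely by continuity to a positive linear functional on $C(T_\infty)$ of norm $\Lambda(1) = 1$, and the Riesz representation theorem produces a unique probability measure $\mu$ on $T_\infty$ with $\int h \, d\mu = \Lambda(h)$ for all $h \in C(T_\infty)$. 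In particular $\mu$ satisfies the stated compatibility with each $\mu_n$. Uniqueness is then immediate: any two probability measures inducing the same functional on the dense subspace $\mathcal{A}$ must coincide.

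The main obstacle is the well-definedness of $\Lambda$, which quietly depends on $\pi_m$ being surjective; without this, $f \circ r_n \circ \cdots \circ r_{m-1}$ and $g$ could agree on $\pi_m(T_\infty)$ yet disagree on $T_m \setminus \pi_m(T_\infty)$, and then their integrals against $\mu_m$ need not match. Once surjectivity of the canonical maps is established from surjectivity of the bonding maps $r_n$, the remainder of the argument is routine Stone-Weierstrass plus Riesz.
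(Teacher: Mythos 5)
Your proof is correct and follows essentially the same route as the paper's: pull the functionals $f\mapsto\int f\,d\mu_n$ back along the surjective maps $\pi_n$ to a well-defined positive norm-one functional on the dense unital $*$-subalgebra $\bigcup_n\pi_n^*(C(T_n))$, then extend and apply Riesz. Your discussion of why well-definedness needs surjectivity of $\pi_m$ is a useful elaboration of a step the paper passes over with "equation \eqref{consistency} implies that $\phi$ is well-defined," but it is not a different argument.
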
 

\begin{proof}
Since each $r_n$ is surjective, so is each $\pi_n$, and the map $\pi_n^*:f\mapsto f\circ \pi_n$ of $C(T_n)$ into $C(T_\infty)$ is isometric. The subset $\bigcup_{n=0}^\infty \pi_n^*(C(T_n))$ of $C(T_\infty)$ is a unital $*$-subalgebra of $C(T_\infty)$ which separates points of $T_\infty$, and hence by the Stone-Weierstrass theorem is dense in $C(T_\infty)$. Construct a functional $\phi$ on the dense subset $\bigcup \pi_n^*(C(T_n))$ of $C(T_\infty)$ by $\phi(\pi_n^*(f))=\int f\,d\mu_n$ for $f\in C(T_n)$; equation \eqref{consistency} implies that $\phi$ is well-defined. Taking $f=1$ in \eqref{consistency} shows that each $\mu_n$ is a probability measure; since the maps $\pi_n^*$ are isometric, this implies that $\phi$ is a positive functional with norm~$1$. Thus $\phi$ extends to a positive functional of norm $1$ on $C(T_\infty)$, and the Riesz representation theorem gives us the measure $\mu$. The uniqueness follows from density of $\bigcup_{n=0}^\infty \pi_n^*(C(T_n))$.
\end{proof}

Now we return to our specific situation, where  we again write $(K,\beta)$ for $(\widehat\Gamma,\alpha^*)$.

\begin{prop}\label{tauexists}
Denote by $\pi_n$ the canonical map of $\CS_\beta:=\varprojlim (K,\beta)$ onto the $n$th copy of $K$. There is a unique probability measure\footnote{When $K=\T$ and $\beta(z)=z^N$, this is same as the measure constructed by Dutkay in \cite[Proposition~4.2(i)]{dut}. In our notation, his defining property is 
\begin{equation}\label{dutkaysdef}
\int_{\CS_N} (f\circ\pi_n)\,d\tau=\int_{\T} \frac{1}{N^{n}}\Big(\sum_{w^{N^{n}}=z} f(w)\big(\textstyle{\prod_{j=0}^{n-1}|m(w^{N^j})|^2}\big)\Big)\,dz.
\end{equation} 
and his uniqueness statement is \cite[Proposition~4.2(ii)]{dut}. To see that our defining property is equivalent, notice that for any $g\in L^\infty(\T)$ and any $p\in \N$, we have
\begin{align*}
\int_{\T} \frac{1}{p}\Big(\sum_{w^{p}=z} g(w)\Big)\,dz
&=\int_{0}^1 \frac{1}{p}\Big(\sum_{j=0}^{p-1} g(e^{2\pi i(j+x)/p})\Big)\,dx\\
&=\sum_{j=0}^{p-1}\int_{j/p}^{(j+1)/p} g(e^{2\pi it})\,dt\\
&=\int_{\T} g(z)\,dz.
\end{align*}} 
 $\tau$ on $\CS_\beta$ such that for every $f\in C(K)$,
\begin{equation}\label{easierdef}
\int_{\CS_\beta} (f\circ\pi_n)\,d\tau=\int_{K} f(k)\big(\textstyle{\prod_{j=0}^{n-1}|m(\beta^j(k))|^2}\big)\,dk.
\end{equation}
\end{prop}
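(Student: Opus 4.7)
The plan is to apply Lemma~\ref{abstractmeas} to the inverse system where every $T_n$ is a copy of $K$ and every $r_n$ is $\beta$. The candidate measures on $T_n = K$ are defined by
\[
d\mu_n(k) := \Big(\prod_{j=0}^{n-1} |m(\beta^j(k))|^2\Big)\,dk,
\]
with the empty product convention making $\mu_0$ equal to normalized Haar measure on $K$, hence a probability measure. Once the consistency condition \eqref{consistency} is verified, Lemma~\ref{abstractmeas} produces a unique probability measure $\tau$ on $\CS_\beta$, and the formula \eqref{easierdef} is then precisely the conclusion of the lemma applied to these $\mu_n$.

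The main step is verifying, for $f \in C(K)$ and $n \geq 0$, the identity
\[
\int_K f(\beta(k))\Big(\prod_{j=0}^{n}|m(\beta^j(k))|^2\Big)\,dk = \int_K f(k)\Big(\prod_{j=0}^{n-1}|m(\beta^j(k))|^2\Big)\,dk.
\]
I would pull the factor $|m(k)|^2$ out of the left-hand product and reindex the remaining factors as $\prod_{j=0}^{n-1}|m(\beta^j(\beta(k)))|^2$, so that the left-hand side becomes $\int_K g(\beta(k))|m(k)|^2\,dk$ where $g(k) := f(k)\prod_{j=0}^{n-1}|m(\beta^j(k))|^2$. Then I would apply Lemma~\ref{changevbles}(b) with a Borel section $c$ of $\beta$: since $\beta(ac(k)) = k$ for every $a \in \ker\beta$, this integral becomes
\[
\int_K g(k)\, N^{-1}\Big(\sum_{a\in\ker\beta}|m(ac(k))|^2\Big)\,dk,
\]
and the filter equation \eqref{deffilter} collapses the inner sum to $N$, leaving $\int_K g(k)\,dk$, as required.

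The hardest point is really bookkeeping: one has to be careful that the shift of the product index matches exactly the factor $|m(k)|^2$ produced by the filter equation, and that the almost-everywhere equality in \eqref{deffilter} is harmless because we integrate against the bounded function $g$. Taking $f \equiv 1$ in the identity above and iterating shows that $\mu_n(K) = \mu_0(K) = 1$, so each $\mu_n$ is automatically a probability measure (which is also noted in the proof of Lemma~\ref{abstractmeas}). Uniqueness of $\tau$ is built into Lemma~\ref{abstractmeas} via the Stone--Weierstrass density of $\bigcup_n \pi_n^*(C(K))$ in $C(\CS_\beta)$, so nothing extra needs to be verified.
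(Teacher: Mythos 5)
Your proof is correct and follows essentially the same route as the paper: define the consistent family of measures with densities $\prod_{j=0}^{n-1}|m(\beta^j(k))|^2$, verify the consistency condition \eqref{consistency} via the change-of-variables identity $\int_K g(\beta(k))|m(k)|^2\,dk=\int_K g(k)\,dk$, and invoke Lemma~\ref{abstractmeas}. The only cosmetic difference is that you derive that identity inline from Lemma~\ref{changevbles}(b) and the filter equation \eqref{deffilter}, whereas the paper isolates it as Lemma~\ref{changevble} and proves it in exactly the same way.
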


For the proof we need the following lemma, which follows from part (b) of Lemma~\ref{changevbles} by essentially the same calculation which proves that $S_m$ is an isometry (see \eqref{Smisometric}).

\begin{lemma}\label{changevble}
For every $g\in L^\infty(K)$ we have
\begin{equation*}\label{Sisometric}
\int_{K}g(\beta(k))|m(k)|^2\,dk=\int_K g(k)\,dk.
\end{equation*} 
\end{lemma}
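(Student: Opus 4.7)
The plan is to mimic almost verbatim the calculation \eqref{Smisometric} that showed $S_m$ is isometric, but with the integrand $|m(k)f(\beta(k))|^2$ replaced by the integrand $g(\beta(k))|m(k)|^2$. First I would apply Lemma~\ref{changevbles}(b) to the bounded Borel function $k\mapsto g(\beta(k))|m(k)|^2$ (strictly speaking, part~(b) of Lemma~\ref{changevbles} is stated for continuous $f$, but the proof given there uses only translation-invariance of Haar measure, so the identity extends to all bounded Borel integrands by a routine density argument, or equivalently by observing that both sides define the same positive linear functional that extends from $C(K)$ to $L^\infty(K)$).

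This application rewrites the left-hand side as
\[
\int_K g(\beta(k))|m(k)|^2\,dk
=\int_K N^{-1}\Big(\sum_{a\in\ker\beta} g(\beta(ac(k)))\,|m(ac(k))|^2\Big)\,dk.
\]
The second step is to exploit the key simplification: since $a\in\ker\beta$ and $\beta(c(k))=k$, we have $\beta(ac(k))=\beta(c(k))=k$, so $g(\beta(ac(k)))=g(k)$ does not depend on $a$ and can be pulled outside the inner sum, giving
\[
\int_K N^{-1} g(k)\Big(\sum_{a\in\ker\beta}|m(ac(k))|^2\Big)\,dk.
\]
The third and final step is to invoke the filter equation \eqref{deffilter}, which (applied at the point $c(k)$) says that the inner sum equals $N$ for almost every $k$; the factor $N$ cancels the $N^{-1}$ and what remains is $\int_K g(k)\,dk$, as required.

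There is really no serious obstacle here: the whole content is that the Borel section $c$ lets one rewrite the integral as a sum of $N$ integrals of $g(k)|m(ac(k))|^2$, and then the filter equation collapses the sum. The only minor subtlety worth mentioning in the write-up is the extension of Lemma~\ref{changevbles}(b) from $C(K)$ to $L^\infty(K)$, which I would either remark on explicitly or sidestep by first assuming $g$ continuous (enough for the intended use in Proposition~\ref{tauexists}) and noting that the general case follows by a monotone-class/density argument.
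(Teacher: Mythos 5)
Your proposal is correct and is exactly the argument the paper intends: the paper simply remarks that Lemma~\ref{changevble} ``follows from part (b) of Lemma~\ref{changevbles} by essentially the same calculation which proves that $S_m$ is an isometry,'' and your three steps (apply Lemma~\ref{changevbles}(b), use $\beta(ac(k))=k$ to pull $g(k)$ out of the sum, then invoke the filter equation \eqref{deffilter}) spell out precisely that calculation. Your side remark about extending Lemma~\ref{changevbles}(b) from $C(K)$ to bounded Borel integrands is a reasonable point of care that the paper leaves implicit.
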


\begin{proof}[Proof of Proposition~\ref{tauexists}]
We take $\tau_0$ to be normalized Haar measure, and define measures $\tau_n$ for $n\geq 1$ by
\begin{equation}\label{deftaun}
\int f\,d\tau_n=\int_{K} f(k)\big({\textstyle{\prod_{j=0}^{n-1}|m(\beta^j(k))|^2}}\big)\,dk \ \text{ for $f\in C(K)$.}
\end{equation}
To verify the consistency condition \eqref{consistency}, let $f\in C(K)$. Then
\begin{equation}\label{checkconsist}
\int(f\circ r_n)\,d\tau_{n+1}
=\int_{K} f(\beta(k))\big(\textstyle{\prod_{j=1}^{n}|m(\beta^j(k))|^2}\big)|m(k)|^2\,dk.
\end{equation}
Now Lemma~\ref{changevble} implies that the right-hand side of \eqref{checkconsist} is
\[
\int_{K} f(k)\big({\textstyle{\prod_{j=1}^{n}|m(\beta^{j-1}(k))|^2}}\big)\,dk=\int f\,d\tau_n.
\]
Thus the measures $\tau_n$ satisfy the hypotheses of Lemma~\ref{abstractmeas}, and the result follows from that lemma.
\end{proof}

We now want to identify the direct limit $(L^2(K)_\infty,U_n)$ with $(L^2(\CS_\beta,\tau),\pi_n^*)$. For this to be useful, we need to know what the isomorphism does to the dilation $S_\infty$ and the translations $\mu_\infty(\gamma)$. To describe the dilation on $L^2(\CS_\beta,\tau)$ we need the shift $h:\CS_\beta\to \CS_\beta$ characterized by $\pi_n(h(\zeta))=\pi_{n-1}(\zeta)$; if we realise elements of the inverse limit as sequences $\zeta=\{\zeta_n:n\geq 0\}$ satisfying $\beta(\zeta_{n+1})=\zeta_n$, then $h(\zeta_0,\zeta_1,\cdots)=(\beta(\zeta_0), \zeta_0, \zeta_1,\cdots)$.

\begin{theorem}\label{solenoidreal}
Suppose that $m:\widehat\Gamma\to \C$ is a filter for $\alpha^*\in \End\widehat\Gamma$ such that $m^{-1}(0)$ has Haar-measure zero. Let $\tau$ be the measure on $\CS_{\alpha^*}$ described in Proposition~\ref{tauexists}. Then there is an isomorphism $V_\infty$ of $L^2(\CS_{\alpha^*},\tau)$ onto the direct limit $L^2(\widehat\Gamma)_\infty=\varinjlim(L^2(\widehat\Gamma), S_m)$ such that
\begin{itemize}
\item[(a)] $V_\infty(g\circ\pi_n)=U_n\big(g\big(\prod_{j=0}^{n-1}(m\circ\alpha^{*j})\big)\big)$;
\smallskip
\item[(b)] $(V_\infty^*S_\infty V_\infty f)(\zeta)=m(\pi_0(\zeta))f(h(\zeta))$; and
\smallskip
\item[(c)] $(V_\infty^*\mu_\infty(\gamma) V_\infty f)(\zeta)=\pi_0(\zeta)(\gamma)f(\zeta)$.
\end{itemize}
\end{theorem}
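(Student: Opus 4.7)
My plan is to apply Theorem~\ref{iddirlim} with the unitary $D$ and representation $\lambda$ on $L^2(\CS_\beta,\tau)$ characterised by (b) and (c), and with $R:L^2(K)\to L^2(\CS_\beta,\tau)$ defined by $Rg=g\circ\pi_0$.  Setting $V_\infty:=R_\infty^{-1}$ will then give the desired isomorphism, and property (a) will drop out of the identity $R_n=D^{-n}R$ once we compute $D^n(g\circ\pi_n)$ explicitly.

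First I would check that $R$ is an isometry: since $\pi_0{}_*\tau=\tau_0$ is Haar measure on $K$, this is immediate.  Next, using the identities $\pi_0\circ h=\beta\circ\pi_0$ and $\pi_n\circ h=\pi_{n-1}$ for $n\geq1$, together with the fact that $\beta=\alpha^*$ gives $\beta(\omega)(\gamma)=\omega(\alpha(\gamma))$, I would verify the three intertwining relations $RS_m=DR$, $R\mu_\gamma=\lambda_\gamma R$, and $D\lambda_\gamma D^*=\lambda_{\alpha(\gamma)}$ by direct computation.

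The main obstacle is showing that $D$ is unitary.  For the isometry property, I would test on the dense subspace $\bigcup_n\pi_n^*(C(K))$: if $f=g\circ\pi_n$ then $Df=(m\circ\pi_0)\cdot(g\circ\pi_{n-1})$, which has modulus $[|g|^2\cdot(|m|^2\circ\beta^{n-1})]\circ\pi_{n-1}$; applying the defining property \eqref{easierdef} of $\tau$ reduces $\|Df\|^2$ to $\int_K|g|^2\prod_{j=0}^{n-1}|m(\beta^j(k))|^2\,dk=\|f\|^2$.  For surjectivity of $D$ (on this same dense subspace), I would exhibit the preimage $f(\eta):=g(\pi_{n+1}(\eta))/m(\pi_1(\eta))$ of $g\circ\pi_n$; the hypothesis that $m^{-1}(0)$ has Haar measure zero guarantees that $\{\eta:m(\pi_1(\eta))=0\}$ has $\tau$-measure zero (again by \eqref{easierdef}), so $f$ is defined $\tau$-a.e., and the same identity \eqref{easierdef} makes the norm computation work out.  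This is the step I expect to require the most care.

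With $D$ unitary and the hypotheses of Theorem~\ref{iddirlim} verified, I obtain $R_\infty:L^2(K)_\infty\to\overline{\bigcup_{n\geq0}D^{-n}R(L^2(K))}$ intertwining $(S_\infty,\mu_\infty)$ with $(D,\lambda)$.  To get surjectivity onto all of $L^2(\CS_\beta,\tau)$, I would compute $D^n(g\circ\pi_n)$: using $\pi_0(h^j\zeta)=\beta^j(\pi_0\zeta)$ and $\pi_n(h^n\zeta)=\pi_0(\zeta)$, one finds
\[
D^n(g\circ\pi_n)=R\Bigl(g\cdot\textstyle\prod_{j=0}^{n-1}(m\circ\beta^j)\Bigr),
\]
so $g\circ\pi_n\in D^{-n}R(L^2(K))$ whenever $g\circ\pi_n\in L^2(\CS_\beta,\tau)$.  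Since $\bigcup_n\pi_n^*(C(K))$ is dense in $L^2(\CS_\beta,\tau)$ by the Stone--Weierstrass argument used in Lemma~\ref{abstractmeas}, the range of $R_\infty$ is all of $L^2(\CS_\beta,\tau)$.  Setting $V_\infty:=R_\infty^{-1}$ and reading off the displayed formula gives property (a), while (b) and (c) come from inverting the intertwining relations $R_\infty S_\infty R_\infty^*=D$ and $R_\infty\mu_\infty(\gamma) R_\infty^*=\lambda_\gamma$.
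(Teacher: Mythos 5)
Your proposal is correct, but it is organised quite differently from the paper's proof. The paper deliberately does \emph{not} apply Theorem~\ref{iddirlim} here (it says so explicitly), because the desired isomorphism points from $L^2(\CS_\beta,\tau)$ into the direct limit; instead it builds an isomorphism of direct systems, replacing the $n$th copy of $L^2(K)$ by $L^2(K,\tau_n)$ via the multiplication operators $V_nf=\bigl(\prod_{j=0}^{n-1}(m\circ\beta^j)\bigr)f$ (surjective precisely because $m^{-1}(0)$ is null) and then recognising $(L^2(\CS_\beta,\tau),\,\pi_n^*)$ as the limit of the new system. You go the other way: you feed $R=\pi_0^*$ and the operators $D,\lambda$ defined by the formulas in (b) and (c) into Theorem~\ref{iddirlim}, prove surjectivity of $R_\infty$ via the identity $D^n(g\circ\pi_n)=R\bigl(g\prod_{j=0}^{n-1}(m\circ\beta^j)\bigr)$, and set $V_\infty=R_\infty^{-1}$. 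The price of your route is exactly the step you flagged: you must verify by hand that $D$ is a unitary on $L^2(\CS_\beta,\tau)$ (the paper gets this for free, since its $D$ is conjugate to the unitary $S_\infty$), and your argument for this --- isometry on the dense subspace $\bigcup_n\pi_n^*(C(K))$ via \eqref{easierdef}, plus explicit preimages $f(\eta)=g(\pi_{n+1}(\eta))/m(\pi_1(\eta))$ whose $L^2$-norms are controlled by the same identity, with $m^{-1}(0)$ null ensuring everything holds $\tau$-a.e. --- is sound. (Two small points worth tightening in a write-up: the expression you call the ``modulus'' of $Df$ is its squared modulus; and having defined $D$ by an everywhere formula and checked isometry on a dense subspace, you should note that the continuous extension still agrees with that formula a.e., which follows because the isometry identity shows $\tau$-null sets pull back under $h$ to sets on which $m\circ\pi_0$ vanishes $\tau$-a.e.) The benefit of your route is that it avoids introducing the auxiliary measures $\tau_n$ and the ladder of direct systems altogether, and it makes the hypotheses of Theorem~\ref{iddirlim} do all the bookkeeping for (b) and (c).
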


We have chosen to look for an isomorphism from $L^2(\CS_{\alpha^*},\tau)$ to  $L^2(\widehat\Gamma)_\infty$ because this will be more convenient in the applications. However, this choice means that we cannot simply apply Theorem~\ref{iddirlim} to find the desired isomorphism. So we need to find different ways of exploiting the universal property of the direct limit.

\begin{proof}
Again we write $(K,\beta)$ for $(\widehat\Gamma,\alpha^*)$.
We begin by showing that the direct limit system defining $L^2(K)_\infty$, in which each Hilbert space is $L^2(K)$, is isomorphic to one in which the $n$th Hilbert space is $L^2(K,\tau_n)$ (where $\tau_n$ is the measure defined in \eqref{deftaun}). We define $T_n:L^2(K,\tau_n)\to L^2(K,\tau_{n+1})$ by $T_nf=f\circ \beta$; the consistency condition $\int(f\circ r_n)\,d\tau_{n+1}=\int f\,d\tau_n$ (checked in the proof of Proposition~~\ref{tauexists}) says that $T_n$ is an isometry. With $V_0=1$ and 
\[
V_nf:=\big({\textstyle \prod_{j=0}^{n-1}(m\circ\beta^j)}\big)f,
\]
we have the following commutative diagram of isometries: 
\[\xygraph{
{L^2(K)}="w0":[rr]{L^2(K,\tau_1)}="w1"^{T_0}:[rr]{L^2(K,\tau_2)}="w2"^{T_1}:[rr]{\cdots}="w3"^{T_2}
"w0":[uu]{L^2(K)}="v0"^{V_0}:[rr]{L^2(K)}="v1"^{S_m}:[rr]{L^2(K)}="v2"^{S_m}:[rr]{\cdots}="v3"^{S_m}
"w1":"v1"^{V_1}"w2":"v2"^{V_2}
}\]
Since the filter $m$ is non-zero except on a set of measure zero, each $V_n$ is surjective, and the $V_n$ form an isomorphism of the direct systems.

To identify the direct limit of the new system, we consider the maps $R_n:f\mapsto f\circ \pi_{n}$; equation \eqref{easierdef} implies that $R_n$ is an isometry of $L^2(K,\tau_n)$ into $L^2(\CS_\beta,\tau)$, and the formula $\beta\circ\pi_{n+1}=\pi_{n}$ implies that we have a commutative diagram
\[\label{commdiagforHilb}\xygraph{
{L^2(K)}="w0":[rr]{L^2(K,\tau_1)}="w1"^{T_0}:[rr]{L^2(K,\tau_2)}="w2"^{T_1}:[rr]{\cdots}="w3"^{T_2}
"w1":[dd]{L^2(\CS_\beta,\tau)}="v1"_{R_1}"w0":"v1"_{R_0}"w2":"v1"^{R_2}
}\]
Since the functions of the form $f\circ\pi_n$ span a dense subspace of $C(\CS_\beta)$ and hence also of $L^2(\CS_\beta,\tau)$, the isometries $R_n$ induce an isomorphism of the direct limit onto $L^2(\CS_\beta,\tau)$. Alternatively, we can say that $(L^2(\CS_\beta,\tau),R_n)$ is a direct limit for the system.

Since isomorphic direct systems have isomorphic direct limits, we deduce that there is an isomorphism $V_\infty$ of $L^2(\CS_\beta,\tau)$ onto $L^2(K)_\infty$ such that $V_\infty R_n=U_nV_n$, which is equation~(a).

It is enough to verify formulas (b) and (c) for $f$ of the form $f=R_ng=g\circ\pi_{n}$. For (b), we have
\[
V_\infty^*S_\infty V_\infty R_n=V_\infty^*S_\infty U_nV_n=(V_\infty^* U_n)(S_mV_n)=R_nV_n^*V_{n+1}T_n.
\]
To compute the latter, we let $g\in C(K)$ and $\zeta\in \CS_\beta$. Then
\begin{align*}
(R_nV_n^*V_{n+1}T_ng)(\zeta)
&=(V_n^*V_{n+1}T_ng)(\pi_{n}(\zeta))\\
&=\big(\textstyle{\prod_{j=0}^{n-1}m(\beta^j(\pi_{n}(\zeta)))^{-1}}\big)\big(\textstyle{\prod_{j=0}^{n}m(\beta^j(\pi_{n}(\zeta)))}\big)g(\beta(\pi_{n}(\zeta)))\\
&=m(\beta^n(\pi_{n}(\zeta)))g(\pi_{n-1}(\zeta))\\
&=m(\pi_0(\zeta))(R_ng)(h(\zeta)),
\end{align*}
and (b) follows. 

For (c), we begin by expanding
\[
V_\infty^*\mu_\infty(\gamma)V_\infty R_n=V_\infty^*\mu_\infty(\gamma)U_nV_n=V_\infty^*U_n\mu_{\alpha^n(\gamma)}V_n.
\]
Now we observe that both $\mu_{\alpha^n(\gamma)}$ and $V_n$ are multiplication operators, and hence commute (formally at least: strictly speaking, the two $\mu_{\alpha^n(\gamma)}$ act on different spaces). Thus 
\[
V_\infty^*\mu_\infty(\gamma)V_\infty R_n=V_\infty^*U_nV_n\mu_{\alpha^n(\gamma)}=R_n\mu_{\alpha^n(\gamma)}.
\]
For $g\in C(K)$ and $\zeta\in \CS_\beta$, we have
\begin{align*}
(R_n\mu_{\alpha^n(\gamma)}g)(\zeta)&=(\mu_{\alpha^n(\gamma)}g)(\pi_{n}(\zeta))=\pi_{n}(\zeta)(\alpha^n(\gamma))g(\pi_{n}(\zeta))\\
&=\beta^n(\pi_n(\zeta))(\gamma)(R_ng)(\zeta)=\pi_0(\zeta)(\gamma)(R_ng)(\zeta),
\end{align*}
which gives (c).
\end{proof}

\subsection{Dutkay's Fourier transform for $\CR$}

As a first application of Theorem~\ref{solenoidreal}, we apply it with $\Gamma=\Z$, $\alpha(j)=3j$ and $m(z)=2^{-1/2}(1+z^2)$. The resulting isometry $S_m$ on $L^2(\T)$ is the same one we considered in \S\ref{sec:cantor},  so Theorem~\ref{solenoidreal} gives an alternative realization of the direct limit $L^2(\T)_\infty$ as a space of functions on the solenoid $\CS_3$. Combining this isomorphism with that of Proposition~\ref{idwithL2CR} gives an isomorphism of $L^2(\CS_3,\tau)$ onto $L^2(\CR,\nu)$. The inverse of this isomorphism is Dutkay's ``Fourier transform for $\CR$'', as established in \cite[Corollary~5.8]{dut}.

\begin{cor}\label{dutkayFt}
Consider the filter $m(z)=2^{-1/2}(1+z^2)$ for dilation by $3$, and let $(L^2(\CR,\nu),D,\lambda)$ be as in \S\textnormal{\ref{sec:cantor}}. Let $\tau$ be the measure on the solenoid $\CS_3=\varprojlim(\T,z\mapsto z^3)$ described in Proposition~\textnormal{\ref{tauexists}}. Then there is an isomorphism $\CF$ of $L^2(\CR,\nu)$ onto $L^2(\CS_3,\tau)$ such that
\begin{itemize}
\item[(a)] $(\CF D\CF^* f)(\zeta)=m(\pi_0(\zeta))f(h(\zeta))$,
\smallskip
\item[(b)] $(\CF\lambda_k \CF^*f)(\zeta)=\pi_0(\zeta)^kf(\zeta)$, and
\smallskip
\item[(c)] $\CF(\chi_C)=1$.
\end{itemize}
\end{cor}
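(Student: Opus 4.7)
The plan is to obtain $\CF$ by composing two isomorphisms already in hand: the one from Proposition~\ref{idwithL2CR}, $R_\infty: L^2(\T)_\infty \to L^2(\CR,\nu)$, and the one from Theorem~\ref{solenoidreal}, $V_\infty: L^2(\CS_3,\tau) \to L^2(\T)_\infty$. Theorem~\ref{solenoidreal} applies because the filter $m(z)=2^{-1/2}(1+z^2)$ vanishes only at $z=\pm i$, a Haar-null set. I would set
\[
\CF := V_\infty^{-1}\circ R_\infty^{-1}:L^2(\CR,\nu)\longrightarrow L^2(\CS_3,\tau),
\]
which is unitary as a composition of unitaries.

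For (a), combine the intertwining relation $R_\infty S_\infty R_\infty^*=D$ from Theorem~\ref{iddirlim} (via Proposition~\ref{idwithL2CR}) with Theorem~\ref{solenoidreal}(b):
\[
\CF D\CF^* = V_\infty^{-1}(R_\infty^{-1} D R_\infty)V_\infty = V_\infty^{-1} S_\infty V_\infty,
\]
and Theorem~\ref{solenoidreal}(b) identifies the right-hand side as multiplication/shift $f\mapsto m(\pi_0(\zeta))f(h(\zeta))$. For (b), the same calculation with $R_\infty \mu_\infty(k) R_\infty^*=\lambda_k$ and Theorem~\ref{solenoidreal}(c) gives $\CF\lambda_k\CF^*=V_\infty^{-1}\mu_\infty(k)V_\infty$, which acts as multiplication by $\pi_0(\zeta)^k$ since $\pi_0(\zeta)(k)=\pi_0(\zeta)^k$ under the pairing $\widehat\Z=\T$.

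For (c), I would trace $\chi_C$ back through the two isomorphisms. In the proof of Theorem~\ref{iddirlim} we have $R_n=D^{-n}R$, so $R_0=R$, and Lemma~\ref{defR} gives $R(e_0)=\chi_C$. Since $R_\infty U_0=R_0$, this means $R_\infty^{-1}(\chi_C)=U_0(e_0)\in L^2(\T)_\infty$. On the other side, specializing property (a) of Theorem~\ref{solenoidreal} to $n=0$ and $g=1$ (where the empty product equals $1$) gives $V_\infty(1)=U_0(1)=U_0(e_0)$, so $V_\infty^{-1}(U_0(e_0))=1$. Composing yields $\CF(\chi_C)=1$.

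There is no real obstacle: the work was done in proving Proposition~\ref{idwithL2CR} and Theorem~\ref{solenoidreal}. The only thing to be careful about is bookkeeping — the direction of each map, the fact that $R_\infty$ is the isomorphism whose existence follows from Theorem~\ref{iddirlim} applied to the setup of Proposition~\ref{idwithL2CR}, and the identification of $U_0(e_0)$ as the common image point that pins down (c).
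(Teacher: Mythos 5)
Your proposal is correct and follows essentially the same route as the paper: $\CF=(R_\infty\circ V_\infty)^{-1}$, with (a) and (b) obtained by chaining the intertwining relations of Proposition~\ref{idwithL2CR} and Theorem~\ref{solenoidreal}, and (c) by the computation $R_\infty V_\infty(1)=R_\infty(U_0(1))=R_0(1)=\chi_C$. Your explicit check that $m^{-1}(0)=\{\pm i\}$ is Haar-null (so Theorem~\ref{solenoidreal} applies) is a small point the paper leaves implicit, but it is the same argument.
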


\begin{proof}
The composition of the isomorphism $V_\infty:L^2(\CS_3,\tau)\to L^2(\T)_\infty$ of Theorem~\ref{solenoidreal} with the isomorphism $R_\infty: L^2(\T)_\infty\to L^2(\CR,\nu)$ constructed in the proof of Proposition~\ref{idwithL2CR} is an isomorphism of $L^2(\CS_3,\tau)$ onto $L^2(\CR,\nu)$; we take $\CF:=(R_\infty\circ V_\infty)^*$. Then (a) and (b) follow from the properties of $R_\infty$ and $V_\infty$. For (c), we compute
\[
R_\infty V_\infty(1)=R_\infty V_\infty(1\circ \pi_0)=R_\infty(U_0(1))=R_0(1)=\chi_C.\qedhere
\]
\end{proof}

Dutkay's proof of Corollary~\ref{dutkayFt} uses a uniqueness theorem for a family of ``wavelet representations'' of the Baumslag-Solitar group $\Z[N^{-1}]\rtimes\Z$ due to Jorgensen \cite[Theorem~2.4]{Jor}. In the next section we show that Jorgensen's theorem also follows easily from our Theorem~\ref{iddirlim}.

Corollary~\ref{dutkayFt} and  Theorem~\ref{dutjorwavelet} imply that the functions 
\[
\hat{\psi_1}=2^{1/2}\CF(\chi_{3^{-1}(C+1)})\ \text{ and }\ \hat{\psi_2}=\CF(\chi_{3^{-1}C}-\chi_{3^{-1}(C+2)})
\]
generate a wavelet basis for $L^2(\CS_3,\tau)$ with respect to the dilation described in (a) and the translation described in (b).

\subsection{The winding line}

When $m:\T\to \C$ is a low-pass filter for dilation by $N$ and $m^{-1}(0)$ has measure zero, we can identify the direct limit $\varinjlim (L^2(\T),S_m)$ with either $L^2(\R)$ (as in Example~\ref{dilatmatrix}) or $L^2(\CS_N,\tau)$ (using Theorem~\ref{solenoidreal}). Combining these two results gives an isomorphism $R_\infty\circ V_\infty$ of $L^2(\CS_N,\tau)$ onto $L^2(\R)$, from which we will obtain a completely different description of the measure $\tau$ as Lebesgue measure on a ``winding line'' obtained from an embedding of $\R$ in the solenoid. 

We begin by deriving a formula for $R_\infty\circ V_\infty$ on functions of the form $g\circ \pi_n$. We resume the notation of Example~\ref{dilatmatrix}, and define $D_N:L^2(\R)\to L^2(\R)$ by $(D_Nf)(t)=N^{1/2}f(Nt)$. Then part (a) of Theorem~\ref{solenoidreal} gives
\begin{align*}
R_\infty\circ V_\infty(g\circ\pi_n)(x)
&=R_\infty\circ U_n\big(z\mapsto g(z)\big(\textstyle{\prod_{j=0}^{n-1}}m(z^{N^{j}})\big)\big)(x)\\
&=D_N^{-n}R\big(z\mapsto g(z)\big(\textstyle{\prod_{j=0}^{n-1}}m(z^{N^{j}})\big)\big)(x)\\
&=N^{-n/2}g(e^{2\pi iN^{-n}x})\big(\textstyle{\prod_{j=0}^{n-1}}m(e^{2\pi iN^{-n+j}x})\big)\phi(N^{-n}x),
\end{align*}
and $n$ applications of the scaling identity \eqref{scaling1} imply that
\[
R_\infty\circ V_\infty(g\circ\pi_n)(x)=g(e^{2\pi iN^{-n}x})\phi(x).
\]
So we introduce the function $w:\R\to \CS_N$ which is uniquely characterized by 
\begin{equation}\label{defwinding}
\pi_n(w(x))=e^{2\pi iN^{-n}x}\ \text{ for $x\in \R$ and $n\geq 0$;}
\end{equation} 
this is the ``winding line'' referred to above.

\begin{theorem}\label{windingline}
Suppose that $m:\T\to \C$ is a low-pass filter for dilation by $N$ which is Lipschitz near $1$, which satisfies Cohen's condition, and for which $m^{-1}(0)$ has measure zero. Let $\phi\in L^2(\R)$ be the associated scaling function satisfying \eqref{scalingdef}, \eqref{scaling1} and \eqref{scaling2}. Let $w:\R\to \CS_N$ be the function satisfying \eqref{defwinding}. Then the measure $\tau$ of Proposition~\ref{tauexists} satisfies
\begin{equation}\label{windingmeas}
\int_{\CS_N} f\,d\tau=\int_{\R} f(w(x))|\phi(x)|^2\,dx\ \text{ for $f\in C(\CS_N)$,}
\end{equation}
and the formula $(Tf)(x):=f(w(x))\phi(x)$ defines a unitary isomorphism $T$ of $L^2(\CS_N,\tau)$ onto $L^2(\R)$ such that $T(V_\infty^*S_\infty V_\infty)T^*=D_N$ and $T(V_\infty^*\mu_\infty(k) V_\infty)T^*$ is multiplication by $e^{2\pi ikx}$. 
\end{theorem}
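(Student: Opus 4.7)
The plan is to recognise the operator $T$ as the composition $R_\infty\circ V_\infty$, where $V_\infty:L^2(\CS_N,\tau)\to L^2(\T)_\infty$ is the isomorphism supplied by Theorem~\ref{solenoidreal} and $R_\infty:L^2(\T)_\infty\to L^2(\R)$ is the isomorphism built in Example~\ref{dilatmatrix} applied to the $1\times1$ dilation matrix $A=(N)$. The computation in the paragraph immediately preceding the theorem already shows that for each $g\in C(\T)$ and each $n\geq 0$,
\[
(R_\infty\circ V_\infty)(g\circ\pi_n)(x)=g(e^{2\pi iN^{-n}x})\phi(x)=(g\circ\pi_n)(w(x))\phi(x),
\]
so on the cylinder function $f=g\circ\pi_n$ the two prescriptions $R_\infty\circ V_\infty$ and $f\mapsto f(w(\cdot))\phi$ already agree.

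First I would extend this agreement from cylinder functions to all of $C(\CS_N)$. The family $\{g\circ\pi_n:g\in C(\T),\ n\geq 0\}$ is a $*$-subalgebra of $C(\CS_N)$ which separates points and contains the constants, so by Stone--Weierstrass it is uniformly dense. If $f_k=g_k\circ\pi_{n_k}\to f$ in sup norm, then $\|f_k(w(\cdot))\phi-f(w(\cdot))\phi\|_{L^2(\R)}\leq\|f_k-f\|_\infty\|\phi\|_2\to 0$, and by continuity of $R_\infty\circ V_\infty$ the left-hand sides converge in $L^2(\R)$ to $(R_\infty\circ V_\infty)(f)$. Hence $(R_\infty\circ V_\infty)(f)(x)=f(w(x))\phi(x)$ almost everywhere, for every $f\in C(\CS_N)$. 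Since $C(\CS_N)$ is dense in $L^2(\CS_N,\tau)$, the pointwise prescription defines a bounded operator on a dense subspace whose continuous extension is precisely the unitary $T:=R_\infty\circ V_\infty$.

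The measure identity \eqref{windingmeas} now drops out. Taking $g=1$ and $n=0$ in the cylinder formula gives $T(1)=\phi$, so for every $f\in C(\CS_N)$,
\[
\int_{\CS_N}f\,d\tau=\langle f,1\rangle_{L^2(\CS_N,\tau)}=\langle Tf,T1\rangle_{L^2(\R)}=\int_{\R}f(w(x))|\phi(x)|^2\,dx.
\]
The intertwining claims are then purely formal: Example~\ref{dilatmatrix} gives $R_\infty S_\infty R_\infty^*=D_N$ and $R_\infty\mu_\infty(k)R_\infty^*$ equal to multiplication by $e^{2\pi ikx}$, so using $T^*=V_\infty^*R_\infty^*$ we obtain
\[
T(V_\infty^*S_\infty V_\infty)T^*=R_\infty S_\infty R_\infty^*=D_N,
\]
and likewise $T(V_\infty^*\mu_\infty(k)V_\infty)T^*$ is multiplication by $e^{2\pi ikx}$.

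The only step requiring real care is the density/continuity argument above: one must check that the pointwise recipe $(Tf)(x)=f(w(x))\phi(x)$ is compatible with the $L^2$ operator $R_\infty\circ V_\infty$ on all of $C(\CS_N)$, so that it really defines a single bounded operator on a dense subspace. Everything else is a transcription of identities already established in Example~\ref{dilatmatrix} and Theorem~\ref{solenoidreal}.
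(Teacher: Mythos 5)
Your proof is correct, but it is organized quite differently from the paper's. You take $T$ to be the composition $R_\infty\circ V_\infty$ of two unitaries already in hand, extend the cylinder-function formula $(R_\infty\circ V_\infty)(g\circ\pi_n)(x)=(g\circ\pi_n)(w(x))\phi(x)$ to all of $C(\CS_N)$ by Stone--Weierstrass, and then read off \eqref{windingmeas} from $\int f\,d\tau=(f\,|\,1)=(Tf\,|\,T1)$; the conjugation identities become formal consequences of the intertwining properties of $R_\infty$ and $V_\infty$. The paper goes the other way around: it proves \eqref{windingmeas} first, by a direct change-of-variables computation using \eqref{scaling1}, \eqref{scaling2} and \eqref{easierdef} together with the uniqueness clause of Proposition~\ref{tauexists}; this shows $T$ is an isometry without invoking $R_\infty$ at all, the two intertwining relations are then checked by hand from parts (b) and (c) of Theorem~\ref{solenoidreal}, and surjectivity is proved last (the range of $T$ contains $V_0$ and is invariant under dilation, hence contains the dense subspace $\bigcup_j V_j$). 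Your route front-loads the surjectivity of $R_\infty$ --- that is, the density of $\bigcup_j V_j$ in $L^2(\R)$, which Example~\ref{dilatmatrix} obtains from ``the standard argument'' --- and in exchange gets the measure formula and the conjugation identities essentially for free; the paper's route is more self-contained and makes visible exactly which scaling identities drive \eqref{windingmeas}. Two small points to keep straight in your version: in the density step you also need uniform convergence $f_k\to f$ to give convergence in $L^2(\CS_N,\tau)$ (true, since $\tau$ is a probability measure, but it should be said), and the pointwise recipe $(Tf)(x)=f(w(x))\phi(x)$ is only well defined for a general element $f$ of $L^2(\CS_N,\tau)$ \emph{after} \eqref{windingmeas} is known, so your order of argument --- identify $T$ on $C(\CS_N)$ first, then deduce \eqref{windingmeas}, then extend --- is the right one and is worth stating explicitly.
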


\begin{proof}
We fix $g\in C(\T)$, $n\geq 0$, and compute:
\begin{align*}
\int_{\R} (g\circ \pi_n)(w(x))&|\phi(x)|^2\,dx=\int_{\R} g(e^{2\pi iN^{-n}x})|\phi(x)|^2\,dx\\
&=\int_{\R} g(e^{2\pi is})N^{n}|\phi(N^ns)|^2\,ds\\
&=\int_{\R} g(e^{2\pi is})\big(\textstyle{\prod_{j=0}^{n-1}|m(e^{2\pi iN^js})|^2}\big)|\phi(s)|^2\,ds\quad \text{ (using \eqref{scaling1})}\\
&=\sum_{k\in\Z}\int_0^1 g(e^{2\pi is})\big(\textstyle{\prod_{j=0}^{n-1}|m(e^{2\pi iN^js})|^2}\big)|\phi(s+k)|^2\,ds\\
&=\int_{\T} g(z)\big(\textstyle{\prod_{j=0}^{n-1}|m(z^{N^j})|^2}\big)\,dz\quad\text{ (using \eqref{scaling2})}\\
&=\int (g\circ \pi_n)\,d\tau\quad\text{  (by \eqref{easierdef}).}
\end{align*}
We can now deduce \eqref{windingmeas} from the uniqueness in Proposition~\ref{tauexists}. Equation~\eqref{windingmeas} implies that $T$ is an isometry of $L^2(\CS_N,\tau)$ into $L^2(\R)$; surjectivity will be easy after we have the other properties of $T$.

For the last two assertions, we let $f\in L^2(\CS_N,\tau)$. First, we use part~(b) of Theorem~\ref{solenoidreal} to see that
\begin{align*}
(T(V_\infty^*S_\infty V_\infty)f)(x)
&=(V_\infty^*S_\infty V_\infty f)(w(x))\phi(x)\\
&=m(\pi_0(w(x)))f(h(w(x)))\phi(x)\\
&=m(e^{2\pi ix})f(w(Nx))\phi(x),
\end{align*}
which by the scaling equation is $N^{1/2}\phi(Nx)f(w(Nx))=(D_NTf)(x)$. Next, we use part~(c) of Theorem~\ref{solenoidreal} to see that
\begin{align*}
(T(V_\infty^*\mu_\infty(k) V_\infty)f)(x)
&=(V_\infty^*\mu_\infty(k) V_\infty f)(w(x))\phi(x)\\
&=\pi_0(w(x))^kf(w(x))\phi(x)\\
&=e^{2\pi ikx}(Tf)(x).
\end{align*}

We still have to prove that $T$ is surjective. For $f\in L^2(\T)$, we have $T(f\circ\pi_0)(x)=f(e^{2\pi ix})\phi(x)$, so the range of $T$ contains the subspace 
\[
V_0=\clsp\{x\mapsto e^{2\pi ikx}\phi(x):k\in \Z\}
\]
in the usual multiresolution analysis $\{V_j\}$ for $L^2(\R)$ associated to the low-pass filter $m$ for dilation by $N$ (as in Example~\ref{dilatmatrix}). Since the formula $T(V_\infty^*S_\infty V_\infty)=D_NT$ implies that the range of $T$ is closed under dilation, the range of $T$ is a closed subspace containing $\bigcup_jV_j$, and hence must be all of $L^2(\R)$.
\end{proof}

\begin{remark}
Ionescu and Muhly \cite{im} have also recognised that the direct limit $L^2(\T)_\infty$ can be realised as both $L^2(\R)$ and $L^2(\CS_N,\tau)$, and conjectured that the measure $\tau$ is supported on the winding line and is absolutely continuous with respect to the measure pulled over from Lebesgue measure on $\R$ (see the second last paragraph of \cite{im}). The formula \eqref{windingmeas} confirms this conjecture, and also identifies the Radon-Nikodym derivative in terms of the scaling function $\phi$.
\end{remark}

\begin{remark}
Theorem~\ref{windingline} holds without significant change for any dilation matrix $A$ and low-pass filter $m:\T^n\to \C$ satisfying the hypotheses of Example~\ref{dilatmatrix}. In this case $A:\R^n\to \R^n$ induces an endomorphism $\alpha$ of $\T^n=\R^n/\Z^n$, and the theorem gives an embedding $w$ of $\R^n$ round the solenoid $\CS_A:=\varprojlim(\T^n,\alpha)$ which carries the measure $|\phi(x)|^2\,dx$ into $\tau$. 
\end{remark}

\section{Uniqueness of the wavelet representation}

We let $(\Gamma_\infty,\iota^n)$ denote the direct limit $\varinjlim(\Gamma,\alpha)$, and write $\alpha_\infty$ for the automorphism of $\Gamma_\infty$ characterized by $\alpha_\infty\circ\iota^n=\iota^n\circ\alpha$. We identify $\Gamma$ with the subgroup $\iota^0(\Gamma)$ of $\Gamma_\infty$, so that $\alpha=\alpha_\infty|_\Gamma$. The semidirect product $\BS(\Gamma,\alpha):=\Gamma_\infty\rtimes_{\alpha_\infty}\Z$ is known as the \emph{Baumslag-Solitar group} of $\alpha$ (see, for example, \cite{dj}). Unitary representations $W:\BS(\Gamma,\alpha)\to U(H)$ are determined by a unitary representation 
$T=W|_\Gamma$ and a unitary operator $U=W_{(0,1)}$ satisfying $UT_\gamma=T_{\alpha(\gamma)}U$; we recover $W$ as $W_{(\alpha_\infty^{-n}(\gamma),j)}=U^{-n}T_\gamma U^{n+j}$. Associated to the unitary representation $T$ is a representation $\pi_W:C(\widehat\Gamma)\to B(H)$ which takes the functions $\widehat\gamma:\omega\mapsto \omega(\gamma)$ to the operators $T_\gamma$; the pair $(\pi_W,U)$ is then covariant in the sense that $U\pi_W(f)U^*=\pi_W(f\circ \alpha^*)$. 

Now suppose that $m$ is a filter for $\alpha^*$ and $h:\widehat\Gamma\to [0,\infty)$ is an integrable function such that
\[
\frac{1}{N}\sum_{a\in \ker\alpha^*} |m(a\omega)|^2h(\omega)=h(\alpha^*(\omega))\ \text{ for almost all $\omega\in \widehat\Gamma$.}
\]
In this section we suppose that $m$ is a continuous function (but see Remark~\ref{normality?} below). Following \cite{Jor}, we say that a unitary representation $W$ of $BS(\Gamma,\alpha)$ on $H$ is a \emph{wavelet representation for $m$ with correlation function $h$} if there is a cyclic vector $\phi\in H$ such that
\begin{itemize}
\item[]\begin{itemize}\item[(WR1)] $U\phi=\pi_W(m)\phi$, and
\smallskip
\item[(WR2)] $(T_\gamma\phi\,|\,\phi)=\int_{\widehat\Gamma} \omega(\gamma)h(\omega)\,d\omega$ for every $\gamma\in \Gamma$;
\end{itemize}
\end{itemize}
we then call $\phi$ a \emph{scaling element} for $W$. Notice that if $h=1$, then (WR2) says that the set $\{T_\gamma\phi:\gamma\in\Gamma\}$ is orthonormal, so in general the correlation function is a measure of the extent to which this set is not orthonormal.

\begin{example} We define a measure $\sigma$ on $\widehat\Gamma$ by $\int f\,d\sigma=\int_{\widehat\Gamma} f(\omega)h(\omega)\,d\omega$, and then a routine calculation, as in \cite[Lemma~3.2]{Jor}, shows that the operator $S_m$ is isometric on $L^2(\widehat\Gamma,\sigma)$. Applying the construction of \S\ref{abstractbs} to $S_m$ and the representation $\mu$ defined by $\mu_\gamma:f\mapsto \widehat\gamma f$ gives a direct limit
$(L^2(\widehat\Gamma,\sigma)_\infty, U_n)$, a unitary dilation $S_\infty$
of $S_m$, and a representation $\mu_\infty$ of $\Gamma$ on
$L^2(\widehat\Gamma,\sigma)_\infty$ such that $S_\infty U_n=U_nS_m$ and
$S_\infty\mu_\infty(\gamma)=\mu_\infty(\alpha(\gamma))S_\infty$. This last identity says that $(S_\infty, \mu_\infty)$
determines a unitary representation $W$ of the Baumslag-Solitar group $\BS(\Gamma,\alpha)$ on $L^2(\widehat\Gamma,\sigma)_\infty$, which we claim is a wavelet representation for $m$ and $h$.

First note that the elements $\mu_\infty(\gamma)U_0(1)=U_0(\mu_\gamma(1))=U_0\widehat\gamma$ span a dense subset of $U_0(L^2(\widehat\Gamma,\sigma))$.
Since $S_\infty^{-n}$ maps the range of $U_0$
onto the range of $U_n$, it follows that the
elements $S_{\infty}^{-n}\mu_\infty(\gamma)U_0(1)=W_{(\alpha_\infty^{-n}(\gamma), n)}U_0(1)$ span a dense subspace of $L^2(\widehat\Gamma,\sigma)_\infty$, and hence $\phi:=U_0(1)$ is cyclic. To verify (WR1), notice that both sides are continuous in $m$, and so it suffices to consider $m=\sum_{\gamma \in \Gamma}a_\gamma \widehat\gamma$. Then
\begin{align*}
\pi_W(m)U_0(1)
 &=\sum_{\gamma\in \Gamma}a_\gamma\mu_\infty(\gamma)U_0(1)=\sum_{\gamma\in \Gamma}a_\gamma
 U_0(\widehat\gamma)=U_0\big({\textstyle{\sum_{\gamma\in \Gamma}a_\gamma\widehat\gamma}}\big)\\
 &=U_0(m)=U_0S_m(1)=S_\infty U_0(1).
\end{align*}
For (WR2), we compute 
\[(\pi_W(\widehat\gamma)U_0(1)\,|\,U_0(1))=(\mu_\infty(\gamma)U_0(1)\,|\,U_0(1))=(U_0(\widehat\gamma)\,|\,U_0(1))=(\widehat\gamma\,|\,1),
\]
which is the right-hand side of (WR2).
\end{example}

In the previous example, we have basically summarized the discussion in \cite[pages~15--20]{Jor} under slightly different hypotheses (see Remark~\ref{normality?}). The next result is the analogue of uniqueness in \cite[Theorem~2.4]{Jor}, and our proof differs from the original in its use of the universal property via Theorem~\ref{iddirlim}.

\begin{prop}[Jorgensen]
Suppose that $W:\BS(\Gamma,\alpha)\to U(H)$ is a wavelet representation for $m$ with correlation function $h$ and scaling element $\phi$. Then there is an isomorphism $X$ of $L^2(\widehat\Gamma,\sigma)_\infty$ onto $H$ such that 
\begin{itemize}
\item[(a)] $W_{(\gamma,0)}=X\mu_\infty(\gamma)X^*$ for $\gamma\in \Gamma$,
\smallskip
\item[(b)] $W_{(0,1)}=XS_\infty X^*$, and
\smallskip
\item[(c)] $XU_0(1)=\phi$.
\end{itemize}
\end{prop}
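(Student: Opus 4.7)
The plan is to apply Theorem~\ref{iddirlim} to an isometry $R\colon L^2(\widehat\Gamma,\sigma)\to H$ defined on characters by $R(\widehat\gamma)=T_\gamma\phi$. Because $h$ is integrable, $\sigma$ is finite, so $C(\widehat\Gamma)$ is dense in $L^2(\widehat\Gamma,\sigma)$; the characters span a dense unital $*$-subalgebra of $C(\widehat\Gamma)$ by Stone-Weierstrass, and hence are dense in $L^2(\widehat\Gamma,\sigma)$. Condition (WR2) is precisely the statement $(R\widehat\gamma\,|\,R\widehat{\gamma'})_H=(\widehat\gamma\,|\,\widehat{\gamma'})_\sigma$, so $R$ extends to an isometry, which on all of $L^2(\widehat\Gamma,\sigma)$ coincides with the map $f\mapsto \pi_W(f)\phi$; in particular $R$ satisfies the module identity $R(fg)=\pi_W(f)R(g)$.

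Next I verify the hypotheses of Theorem~\ref{iddirlim} with $D:=U=W_{(0,1)}$ and $\lambda_\gamma:=T_\gamma=W_{(\gamma,0)}$. Hypothesis~(b) is immediate: $R\mu_\gamma\widehat{\gamma'}=R\widehat{\gamma+\gamma'}=T_{\gamma+\gamma'}\phi=T_\gamma R\widehat{\gamma'}$. For~(a), I use $S_m\widehat\gamma=m\widehat{\alpha(\gamma)}$ to compute
\[
RS_m\widehat\gamma=\pi_W(m)T_{\alpha(\gamma)}\phi,\qquad UR\widehat\gamma=UT_\gamma\phi=T_{\alpha(\gamma)}U\phi=T_{\alpha(\gamma)}\pi_W(m)\phi,
\]
the last step being (WR1); these coincide because $T_{\alpha(\gamma)}=\pi_W(\widehat{\alpha(\gamma)})$ lies in the abelian algebra $\pi_W(C(\widehat\Gamma))$ and therefore commutes with $\pi_W(m)$. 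The remaining condition $D\lambda_\gamma D^*=\lambda_{\alpha(\gamma)}$ of Theorem~\ref{iddirlim} is exactly the Baumslag-Solitar relation $UT_\gamma U^*=T_{\alpha(\gamma)}$.

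Theorem~\ref{iddirlim} then produces an isomorphism $X:=R_\infty$ of $L^2(\widehat\Gamma,\sigma)_\infty$ onto $\overline{\bigcup_{n\geq 0}U^{-n}R(L^2(\widehat\Gamma,\sigma))}\subset H$ intertwining $(S_\infty,\mu_\infty)$ with $(U,T)$, giving conclusions~(a) and~(b); conclusion~(c) is $XU_0(1)=R_0(\widehat 0)=T_0\phi=\phi$. The only substantive step left is to see that $X$ is onto all of $H$. Iterating (WR1) together with the covariance $U\pi_W(f)=\pi_W(f\circ\alpha^*)U$ yields $U^k\phi=\pi_W\big(\prod_{j=0}^{k-1}m\circ\alpha^{*j}\big)\phi\in R(L^2(\widehat\Gamma,\sigma))$ for every $k\geq 0$. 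A general cyclic generator $W_{(\alpha_\infty^{-n}(\gamma),j)}\phi=U^{-n}T_\gamma U^{n+j}\phi$ may, after replacing $(n,\gamma)$ by $(n+p,\alpha^p(\gamma))$ for sufficiently large $p$, be rewritten with inner exponent $n+p+j\geq 0$; then $T_{\alpha^p(\gamma)}U^{n+p+j}\phi\in R(L^2(\widehat\Gamma,\sigma))$ by $T$-invariance of the range, so the whole vector lies in $U^{-(n+p)}R(L^2(\widehat\Gamma,\sigma))$. Cyclicity of $\phi$ then forces density, completing the proof. I expect the abelianness argument in hypothesis~(a) and this shifting bookkeeping in the density step to be the principal subtleties.
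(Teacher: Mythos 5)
Your proposal is correct and follows essentially the same route as the paper: define $R f=\pi_W(f)\phi$ on (the span of) characters, use (WR2) to see that $R$ is isometric, verify the two intertwining hypotheses of Theorem~\ref{iddirlim} via (WR1) and the covariance of $(\pi_W,U)$, and then use cyclicity of $\phi$ for surjectivity. Your only departures are cosmetic ones that the paper leaves implicit --- checking $RS_m=DR$ on characters rather than on all of $C(\widehat\Gamma)$, and spelling out the re-indexing $(n,\gamma)\mapsto(n+p,\alpha^p(\gamma))$ needed to put every cyclic generator $U^{-n}T_\gamma U^{n+j}\phi$ into some $U^{-k}R\big(L^2(\widehat\Gamma,\sigma)\big)$.
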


\begin{proof}
We aim to apply Theorem~\ref{iddirlim} with $\lambda_\gamma=W_{(\gamma,0)}$ and $D=W_{(0,1)}$. We define $R:C(\widehat\Gamma)\to H$ by $Rf=\pi_W(f)\phi$, and claim that $R$ extends to an isometry on $L^2(\widehat\Gamma,\sigma)$. Since $\sigma$ is a regular Borel measure, $C(\widehat\Gamma)$ is dense in $L^2(\widehat\Gamma,\sigma)$, and it suffices to check that $\|Rf\|^2=\|f\|^2$ for $f$ of the form $f=\sum c_\gamma\widehat\gamma$. This follows from a straightforward calculation using the equality in (WR2) above.

The relation $D\lambda_\gamma D^*=\lambda_{\alpha(\gamma)}$ is the covariance relation which characterizes the representations of $\BS(\Gamma,\alpha)$. The covariance of $(\pi_W,D)=(\pi_W,W_{(0,1)})$ implies that
\begin{align*}
(RS_m)f&=\pi_W(m(f\circ\alpha^*))\phi=\pi_W(f\circ\alpha^*)\pi_W(m)\phi\\
&=\pi_W(f\circ\alpha^*)D\phi=D\pi_W(f)\phi=(DR)f,
\end{align*}
and hence $RS_m=DR$. Since $\mu_\gamma(f)$ is the pointwise product $\widehat\gamma f$ we have
\[
(R\mu_\gamma)f=R(\widehat\gamma f)=\pi_W(\widehat\gamma f)\phi=\pi_W(\widehat\gamma)(\pi_W(f)\phi)=W_{(\gamma,0)}(Rf)=(\lambda_\gamma R)f,
\]
and $R\mu_\gamma=\lambda_\gamma R$. So Theorem~\ref{iddirlim} gives an isomorphism $R_\infty$ of $L^2(\widehat\Gamma,\sigma)_\infty$ onto the closure of $\bigcup_{n=0}^\infty D^{-n}R\big(L^2(\widehat\Gamma,\sigma)\big)$. The range of $R$ contains every $\lambda_\gamma(\phi)=R(\widehat\gamma)$, and every $D^n\lambda_\gamma(\phi)=R(S_m\widehat\gamma)$ with $n>0$, so the cyclicity of $\phi$ implies that $R_\infty$ is surjective. 

Properties (a) and (b) of $X:=R_\infty$ follow from the properties of $R_\infty$ in Theorem~\ref{iddirlim}. For (c), notice that $XU_0(1)=R_\infty U_0(1)=R(1)=\phi$, as required.
\end{proof}

\begin{remark}\label{normality?}
When $\Gamma=\Z$ and $\alpha(j)=Nj$, we recover a characterization of the wavelet representations of the classical Baumslag-Solitar group $\Z[N^{-1}]\rtimes \Z$. This is slightly different from Theorem~2.4 of \cite{Jor}, since we have assumed that $m$ is continuous. The result in \cite{Jor} applies to Borel filters $m$, but requires an extra hypothesis on the representation $W$ which ensures that the representation $\pi_W$ of $C(\T)$ extends to a normal representation of $L^\infty(\T)$, so that one can make sense of $\pi_W(m)$ in such a way that the covariance of $(\pi_W,U)$ is preserved. It is not immediately obvious that when $m(z)=2^{-1/2}(1+z^2)$, the representation $W$ of $\Z[3^{-1}]\rtimes\Z$ on $L^2(\CR,\nu)$ constructed in \S\ref{sec:cantor} satisfies this normality hypothesis, so the above version of \cite[Theorem~2.4]{Jor} may be better suited to the application in \cite[\S5.2]{dut}.
\end{remark}

\section*{Conclusions}

We have tackled a variety of problems associated with multiresolution analyses and wavelets using a systematic approach based on direct limits of Hilbert spaces and their universal properties. Previous authors have observed the connection with direct limits (often referring to them as ``inductive limits'', and often referring to the process of turning an isometry into a unitary as ``dilation''); the innovation in our approach lies in the systematic use of the universal property to identify a particular direct limit with a concrete Hilbert space of functions, such as $L^2(\R)$ or $L^2(\CS_N)$. This approach does not eliminate the need for analytic arguments, but it does seem to help identify exactly what analysis is needed: in each situation we have considered, we have quickly been able to identify the ingredients necessary to make our approach work.


\begin{thebibliography}{23}

\bibitem{bcm} 
L.W. Baggett, J.E. Courter and K.D. Merrill, The construction of wavelets from generalized conjugate mirror filters in $L^2(\R^n)$, \emph{Appl. Comput. Harmonic Anal.} \textbf {13} (2002), 201--223.

\bibitem{bjmp}
L.W. Baggett, P.E.T. Jorgensen, K.D. Merrill and J.A. Packer, Construction of Parseval wavelets from redundant filter systems, \emph{J. Math. Phys.} \textbf{46} (2005), \#083502, 1--28.

\bibitem{ijkln}
L.W. Baggett, N.S. Larsen, K.D. Merrill, J.A. Packer and I. Raeburn, Generalized multiresolution analyses with given multiplicity functions, \emph{J. Fourier Anal. Appl.}, published online in May 2008; arXiv.math:0710.2071.


\bibitem{bl} J.J. Benedetto and S. Li, The theory of multiresolution analysis frames and applications to filter banks, \emph{Appl. Comput. Harmonic Anal.} \textbf {5} (1998), 389--427.

\bibitem{bow} M. Bownik, The construction of $r$-regular wavelets for arbitrary dilations, \emph{J. Fourier Anal. Appl.} \textbf{7} (2001), 489--506.


\bibitem{bj}
O. Bratteli and P.E.T. Jorgensen, Isometries, shifts, Cuntz algebras and multiresolution analyses of scale $N$, \emph{Integral Equations \& Operator Theory} \textbf{28} (1997), 382--443.

\bibitem{dmp}
J. D'Andrea, K.D. Merrill and J.A. Packer, Fractal wavelets of Dutkay-Jorgensen type for the Sierpinski gasket space, in \emph{Frames and Operator Theory in Analysis and Signal Processing, Contemp. Math.}, vol. 451, Amer. Math. Soc., Providence, 2008, pages~69--88.




\bibitem{dut} D.E. Dutkay, Low-pass filters and representations of the Baumslag-Solitar group,  \emph{Trans. Amer. Math. Soc.}  \textbf{358}  (2006), 5271--5291. 

\bibitem{dutjor42}
D.E. Dutkay and P.E.T. Jorgensen, Wavelets on fractals, \emph{Revista Mat. Iberoamericana} \textbf{22}  (2006),  131--180.

\bibitem{dutjor}
D.E. Dutkay and P.E.T. Jorgensen, Martingales, endomorphisms, and covariant systems of operators in Hilbert space, \emph{J. Operator Theory} \textbf{58} (2007), 269--310.

\bibitem{dj}
D.E. Dutkay and P.E.T. Jorgensen, A duality approach to representations of Baumslag-Solitar groups, preprint; arXiv.math:0704.2050.


\bibitem{gopbur} 
R.A. Gopinath and C.S. Burrus, Wavelet transforms and filter banks, in \emph{Wavelets:  A Tutorial in Theory and Applications} (C.K. Chui, ed.), Academic Press, Inc., San Diego, 1992,
pages~603--654. 

\bibitem{im}
M. Ionescu and P.S. Muhly, Groupoid methods in wavelet analysis, in \emph{Group Representations, Ergodic Theory, and Mathematical Physics: A Tribute to George W. Mackey, Contemp. Math.}, vol. 449, Amer. Math. Soc., Providence, 2008, pages~193--208.

\bibitem{Jor} P.E.T. Jorgensen, \emph{Ruelle Operators: Functions which Are Harmonic with Respect to a Transfer Operator}, Mem. Amer. Math. Soc. \textbf{152} (2001), no. 720. 

\bibitem{gpots} N.S. Larsen and I. Raeburn, From filters to wavelets via direct limits, in \emph{Operator Theory, Operator Algebras and Applications}, \emph{Contemp. Math.}, vol. 414, Amer. Math. Soc., Providence, 2006, pages~35--40.

\bibitem{mal} S.G. Mallat, Multiresolution approximations and wavelet orthonormal bases of $L^2(\R)$, \emph{Trans. Amer. Math. Soc.} \textbf{315} (1989), 69--87.

\bibitem{parth} K.R. Parthasarathy, \emph{Introduction to Probability and Measure}, Macmillan, Delhi, 1977; Springer-Verlag, New York, 1978.

\bibitem{pet} K. Petersen, \emph{Ergodic Theory}, Cambridge Univ. Press, Cambridge, 1983.


\bibitem{str} R.S. Strichartz, Construction of orthonormal wavelets, in \emph{Wavelets: Mathematics and Applications} (J.J. Benedetto and M.W. Frazier, eds.), CRC press, Boca Raton, 1994, pages~23--50.


\end{thebibliography}
\end{document}